\numberwithin{equation}{section}
\theoremstyle{plain}
\newtheorem{theorem}{Theorem}[section]
\newtheorem{lemma}[theorem]{Lemma}
\newtheorem{corollary}[theorem]{Corollary}
\newtheorem{conjecture}{Conjecture}
 \theoremstyle{definition}
\newtheorem{remark}[theorem]{Remark}
\newcommand\norm[1]{\lVert#1\rVert}
\DeclarePairedDelimiterX{\inp}[2]{\langle}{\rangle}{#1, #2}
\newcommand{\cE}{{\mathcal E}}
\newcommand{\spec}{{\mathrm{Spec}}}
\newcommand{\rank}{{\mathrm{rank}}}
\newcommand{\nul}{{\mathrm{null}}}
\newcommand{\ba}{\begin{eqnarray}}
\newcommand{\na}{\end{eqnarray}}
\newcommand{\ban}{\begin{eqnarray*}}
\newcommand{\nan}{\end{eqnarray*}}
\newcommand{\C}{{\mathbb C}}
\newcommand{\R}{{\mathbb R}}
\renewcommand{\thefootnote}{\fnsymbol{footnote}}
\g@addto@macro{\endabstract}{\@setabstract}
\newcommand{\authorfootnotes}{\renewcommand\thefootnote{\@fnsymbol\c@footnote}}%
\title[]{Some results on spectrum and energy of \\ graphs with loops}
\subjclass[2020]{05C50, 05C90.}
\keywords{Energy, Spectrum, Self-loop graphs.}
\begin{document}

\begin{center}
    \vspace{-1cm}
	\maketitle
	
	\normalsize
	\authorfootnotes
	Saieed Akbari\textsuperscript{1}, 
	Hussah Al Menderj\textsuperscript{2},
	Miin Huey Ang\textsuperscript{2}, \\ 
	Johnny Lim\textsuperscript{2}\footnote[1]{Corresponding author.}, 
	Zhen Chuan Ng\textsuperscript{2}
	\par \bigskip
	
	\textsuperscript{1}\small{Department of Mathematical Sciences, Sharif University of Technology, Tehran, Iran} \par
	\textsuperscript{2}\small{School of Mathematical Sciences, Universiti Sains Malaysia, Malaysia}\par \bigskip
	
\end{center}

\address{Department of Mathematical Sciences, Sharif University of Technology, Tehran, Iran}
\email{s\_akbari@sharif.edu}


\address{School of Mathematical Sciences, Universiti Sains Malaysia, Malaysia 
}
\email{hussahalmenderj@student.usm.my}
\email{mathamh@usm.my}
\email{johnny.lim@usm.my}
\email{zhenchuanng@usm.my}

\vspace{-0.5cm}

\begin{abstract}
Let $G_S$ be a graph with loops obtained from a graph $G$ of order $n$ and loops at $S \subseteq V(G).$ In this paper, we establish a neccesary and sufficient condition on the bipartititeness of a connected graph $G$ and the spectrum $\spec(G_S)$ and $\spec(G_{V(G)\backslash S})$. We also prove that for every $S\subseteq V(G),$ $\cE(G_S) \geq \cE(G)$ when $G$ is bipartite. Moreover, we provide an identification of the spectrum of complete graphs $K_n$ and complete bipartite graphs $K_{m,n}$ with loops. We characterize any graphs with loops of order $n$ whose eigenvalues are all positive or non-negative, and also any graphs with a few distinct eigenvalues. Finally, we provide some bounds related to $G_S$. 
\end{abstract}


\section{Introduction}
\label{intro}
Let $G$ be a simple graph and $V(G)$ be the set of vertices and $E(G)$ be the set of edges. We call $G$ a graph of order $n$ and size $m$ if $|V(G)|=n$ and $|E(G)|=m$. Denote by $\overline{G}$ the complement of $G$. In this paper, the path of order $n$ is denoted by $P_n$ whereas the complete graph of order $n$ is denoted by $K_n$ and the complete bipartite graph with parts $M$ and $N$ with sizes $m$ and $n,$ is denoted as $K_{m,n}$. 
Let $V(G)=\{v_1,\ldots, v_n\}$. For $i=1,2,\ldots,n,$ we denote  $d_i$ the degree of $v_i$ and $\Delta(G)=\max_{1 \leq i \leq n} d_i(G).$ In addition, $G$ is called a \textit{$(a,b)$-semi-regular} if $d_i=a$ or $b$ for $i=1,\ldots, n.$  

The adjacency matrix of $G$, denoted by $A(G)=(a_{ij}),$ whose entry is $a_{ij}=1 $ if $v_i$ and $v_j$ are adjacent and $a_{ij}=0$ otherwise. The characteristic polynomial and eigenvalues of $G$ are the characteristic polynomial and eigenvalues of $A(G)$, respectively. As $A(G)$ is a real symmetric matrix, all eigenvalues of $G$ are real numbers and thus can be ordered as $\lambda_1(G) \geq \lambda_2(G)\geq \cdots \geq \lambda_n(G)$ with $\lambda_1(G)$ and $\lambda_n(G)$ being the largest and the smallest eigenvalue of $G,$ respectively. See \cite{biggs1993algebraic,cvetkovic2010introduction, wilson2015introduction} for more details. All eigenvalues of $G$ with each respective algebraic multiplicity give the spectrum of $G,$ denoted by 
$
\spec(G)= 
\begin{pmatrix} 
\lambda_1 & \lambda_2 & \cdots & \lambda_n \\ 
a_1 & a_2 & \cdots & a_n 
\end{pmatrix},
$ 
where $a_i$ is the algebraic multiplicity of $\lambda_i$. The energy of $G$ is defined as 
\begin{align*}
\cE(G)&=\sum^n_{i=1} |\lambda_i(G)|, \quad (\text{cf. \cite{gutman1979}}).
\end{align*}

In the early 1970s, graph theory had been found to have an important application in the study of calculation of electron and molecules energy \cite{mallion1974graphical,mallion1974graph}. This has initiated the emerging of the concept of energy of simple graphs in 1978 by Gutman \cite{gutman1979} which greatly advances the research study of graphs and the energy of graphs from 1995 till now \cite{ghodrati2022graph,gutman1979,gutman2019energies,gutman2019graph,gutman2020research,gutman2020bounds,ma2019low,shelash2020pseudospectrum,zhou2020lower}. 

The graph obtained from $G$ by attaching a self-loop at each of the vertex in $S\subseteq V(G)$, is called the \textit{self-loop graph} of $G$ at $S$, denoted by $G_S$. Generalizing from the definition of $A(G)$, $A(G_S)=J_S+A(G)$ where $(J_S)_{i,j}=1$ if $i=j$ and $v_i\in S$, and $(J_S)_{i,j}=0$ otherwise. Similar to $G$, the eigenvalues of $G_S$ are the eigenvalues of $A(G_S).$ It can be verified that all properties of eigenvalues of $A(G)$ are attained by $A(G_S)$. The energy of $G_S$  of order $n$ with $|S|=\sigma$ is defined as 
\begin{align*}
\cE(G_S)&=\sum^n_{i=1} \left|\lambda_i(G_S)-\frac{\sigma}{n}\right|, \quad (\text{cf. \cite{gutman2021energy}}).
\end{align*}
In some occasion, we also denote $G$ as the self-loop graph and $G_0$ is the ordinary graph obtained from $G$ by removing all its self-loops. When $\sigma=n,$ we write $G_S$ as $\widehat{G}.$

Self-loop graphs have been shown to play a significant role in the mathematical study of heteroconjugated molecules \cite{gutman1979topological,gutman1990topological,mallion1974graphical}. Recently in 2022, Gutman et al. have introduced the concept of energy of self-loops graphs for the first time in \cite{gutman2021energy}. The study of energy of self-loop graphs is still very new with results appeared in only two papers \cite{gutman2021energy,jovanovic2023}. In \cite{gutman2021energy}, the following results were proved:

Theorem 1. Let $G$ be a bipartite graph of order $n$, with vertex set $V$. Let $S$ be a subset of $V$. Then, $\cE(G_S)=\cE(G_{V\backslash S})$.

Theorem 2. Let $G_S$ be a self-loop graph of order $n$, with $m$ edges, and $|S|=\sigma$. Let $\lambda_1\geq \lambda_2 \geq \cdots \geq \lambda_n$  be its eigenvalues. Then $\sum^n_{i=1} \lambda_i^2 = 2m+\sigma.$

Theorem 3. Let $G_S$ be a self-loop graph of order $n$, with $m$ edges, and $|S|=\sigma$. Then
\[
\cE(G_S)\leq \sqrt{n\left(2m+\sigma -\frac{\sigma^2}{n}\right)}.
\]

This paper consists of four sections of main results. Section 2 first completely determines the $\spec((K_n)_S)$ and $\spec((K_{m,n})_S)$ for all $n,m \geq 1$ using Theorem 2. The results on $\spec((K_n)_S)$ are then used to completely characterize those $G_S$ with only positive or non-negative eigenvalues as well as with few distinct eigenvalues. In Section 3, a necessary and sufficient result on each $\lambda_i(G_{V\backslash S}) = 1-\lambda_i(G_S)$  in relation with the respective $\lambda_i(G_S)$ for every $i=1,\ldots,n$  when $G$ is bipartite, is obtained and this gives a simplified proof of Theorem 1. We also show that $\cE(G_S)\geq \cE(G)$ when $G$ is bipartite and a conjecture is given. In Section 4, an alternative proof of Theorem 3 is given using Cauchy-Schwarz inequality. This approach leads to a result on the semi-regular graphs with self-loops. An upper bound for $\lambda_1(G_S)$  analogous to a classical bound for $\lambda_1(G)$  in terms of  $\Delta(G)$ is obtained. The existence of a certain semi-regular graphs is shown when the upper bound for $\lambda_1(G_S)$ is attained. 

\section{Some characterization of self-loop graphs by its eigenvalues}
\label{sec2}

In this section, we aim to provide some characterization of self-loop graphs with positive and non-negative eigenvalues, as well as those with few distinct eigenvalues. Before that, an identification of  $\spec((K_n)_S)$ and $\spec((K_{m,n})_S)$ are needed for arbitrary $S.$ The results we obtained are a generalisation of the classical spectrum result of $\spec(K_n)$ and $\spec(K_{m,n})$ when $\sigma=0.$

The self-loop spectrum characterization for both $K_n$ and $K_{m,n}$ are technical and require careful examinations of several cases. 
As a rule of thumb, consider $G$ as $K_n$ or $K_{m,n},$ then the steps to determine $\spec(G_S)$ are as follows. \\

\textbf{Step 1. Determination of the multiplicity of eigenvalue $0$.}

Determine the rank of $A(G_S)$ through each of its submatrices. By the Rank-Nullity Theorem, this implies the nullity of $A(G_S),$ the multiplicity $m_0$ of the eigenvalue 0. \bigskip

\textbf{Step 2. Determination of multiplicity of eigenvalue $1$ (for $(K_n)_S$) or $-1$ (for $(K_{m,n})_S$).}

Repeat Step 1 for the matrix $A(G_S)-I_n$ or $A(G_S)+I_n$ to obtain the multiplicity $m_1$ of the eigenvalue $1$ or $-1,$ respectively. \bigskip

\textbf{Step 3. Determination of the remaining eigenvalues and its multiplicities.} 

Find the remaining $n-m_0-m_1$ many eigenvalues by appropriate methods. If there are only two eigenvalues $\lambda_1(G_S),\lambda_2(G_S)$ left, then they can be obtained by solving the simultaneous equations in Lemma~\ref{lambdasum} below. 

\begin{lemma}\cite{gutman2021energy}
\label{lambdasum}
Let $G_S$ be a self-loop graph of order $n$ and $m$ edges. Let $\lambda_1(G_S),\ldots,\lambda_n(G_S)$ be its eigenvalues. Then,
\begin{enumerate}[(i)]
\item $\displaystyle \sum^n_{i=1} \lambda_i(G_S)=\sigma,$
\item $\displaystyle \sum^n_{i=1} \lambda^2_i (G_S)=2m+\sigma.$
\end{enumerate}
\end{lemma}
\noindent For $(K_{m,n})_S,$ there are occasions where we need to determine three eigenvalues $\lambda_i((K_{m,n})_S),$ $i=1,2,3.$  In particular, an additional formula of $\sum_i \lambda^3_i((K_{m,n})_S)$ will be introduced, see Lemma~\ref{3closedwalks}.

Without loss of generality, let $S=\{1,2,\ldots, \sigma\}.$ Let $J_{k\times \ell}$ denote the $k \times \ell$ matrix whose all entries are 1 and $j_k$ be the $k \times 1$ vector with all entries 1.

\subsection{Identification of $\spec((K_n)_S)$}
\label{sec2.1}

In this subsection, $\spec((K_n)_S)$ for arbitrary $S$ are identified according to $\sigma.$

\begin{theorem}
\label{thm1}
Let $(K_n)_S$ be the self-loop graph of $K_n$ and $|S|=\sigma.$ Then,  $\spec((K_n)_S)$ are determined by the following three cases:
\end{theorem}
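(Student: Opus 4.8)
The plan is to follow the three-step scheme described above, organised by the size $\sigma=|S|$. Taking $S=\{1,\dots,\sigma\}$ and writing $M:=A((K_n)_S)=J_n-I_n+J_S$, a looped vertex contributes diagonal entry $1-1+1=1$ and an unlooped vertex contributes $1-1+0=0$, so
\[
M=\begin{pmatrix} J_{\sigma\times\sigma} & J_{\sigma\times(n-\sigma)}\\[2pt] J_{(n-\sigma)\times\sigma} & J_{(n-\sigma)\times(n-\sigma)}-I_{n-\sigma}\end{pmatrix}.
\]
The two extreme values of $\sigma$ are immediate and will form two of the three cases: for $\sigma=0$ we have $M=J_n-I_n=A(K_n)$, recovering the classical $\spec(K_n)$, and for $\sigma=n$ we have $M=J_n$, recovering $\spec(\widehat{K_n})$. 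The substantive case is the range $1\le\sigma\le n-1$.

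First I would dispose of Steps 1 and 2 by writing down eigenvectors supported on coordinate differences. If $i,j\le\sigma$ are two looped vertices, then $M(e_i-e_j)=0$, since the all-ones block annihilates the zero-sum vector $e_i-e_j$ and the loop term $J_S$ cancels the $-I_n$ term on these coordinates; this gives the eigenvalue $0$ with multiplicity at least $\sigma-1$. Symmetrically, if $i,j>\sigma$ are unlooped, then $M(e_i-e_j)=-(e_i-e_j)$, giving the eigenvalue $-1$ with multiplicity at least $n-\sigma-1$. Carrying out Steps 1--2 proper, namely computing the ranks of $M$ and of $M+I_n$ and invoking the Rank--Nullity Theorem, confirms that these multiplicities are exactly $\sigma-1$ and $n-\sigma-1$.

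For Step 3, only two eigenvalues $\lambda_1,\lambda_2$ remain, since $(\sigma-1)+(n-\sigma-1)=n-2$. Applying Lemma~\ref{lambdasum} with $m=\binom{n}{2}$, so that $2m+\sigma=n(n-1)+\sigma$, and subtracting the contributions of the eigenvalues $0$ and $-1$ already found, I obtain
\[
\lambda_1+\lambda_2=\sigma+(n-\sigma-1)=n-1,\qquad \lambda_1^2+\lambda_2^2=(n-1)^2+2\sigma.
\]
Hence $\lambda_1\lambda_2=-\sigma$, so $\lambda_1,\lambda_2$ are the roots of $t^2-(n-1)t-\sigma=0$, namely $\tfrac12\big((n-1)\pm\sqrt{(n-1)^2+4\sigma}\big)$.

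The step needing the most care is confirming that the listed multiplicities are exact and that the three cases are genuinely disjoint. Exactness follows from a dimension count: the difference-eigenvectors above, together with the $M$-invariant plane spanned by the characteristic vectors of $S$ and of $V(K_n)\setminus S$ (on which $M$ acts with characteristic polynomial $t^2-(n-1)t-\sigma$), span all of $\R^n$ because $(\sigma-1)+(n-\sigma-1)+2=n$. It then remains to check that for $1\le\sigma\le n-1$ the two quadratic roots avoid $0$ and $-1$: a root equals $0$ only when $\lambda_1\lambda_2=-\sigma=0$, i.e. $\sigma=0$, and equals $-1$ only when $1+(n-1)-\sigma=n-\sigma=0$, i.e. $\sigma=n$. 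These coincidences occur precisely at the two boundary values, where one of the difference-families becomes empty; tidying up that bookkeeping separates the three cases and completes the proof.
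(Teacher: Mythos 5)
Your proof is correct and arrives at exactly the paper's spectrum, but by a genuinely different route in the part that carries the real content (Steps 1--2). The paper determines the multiplicities of $0$ and $-1$ by computing $\rank(A((K_n)_S))$ and $\rank(A((K_n)_S)+I_n)$ via linear-independence arguments on the blocks and then applying Rank--Nullity; you instead exhibit eigenvectors $e_i-e_j$ directly (differences of looped coordinates for $0$, of unlooped coordinates for $-1$) and obtain exactness from the $M$-invariant splitting $\R^n = W_0 \oplus W_{-1} \oplus \mathrm{span}(\mathbf{1}_S,\mathbf{1}_{V\setminus S})$, where $W_0,W_{-1}$ are the spans of your difference vectors, together with the check that a root of $t^2-(n-1)t-\sigma$ equals $0$ or $-1$ only at $\sigma=0$ or $\sigma=n$. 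That dimension count makes your earlier deferral to ``computing the ranks of $M$ and $M+I_n$'' redundant, and you should delete it: the invariant-subspace argument is self-contained and cleaner. Note also that your $2\times 2$ plane is precisely the quotient of the equitable partition $\{S,\,V\setminus S\}$ --- one verifies $M\mathbf{1}_S=\sigma\mathbf{1}_S+\sigma\mathbf{1}_{V\setminus S}$ and $M\mathbf{1}_{V\setminus S}=(n-\sigma)\mathbf{1}_S+(n-\sigma-1)\mathbf{1}_{V\setminus S}$, with characteristic polynomial $t^2-(n-1)t-\sigma$ --- so Lemma~\ref{lambdasum} is actually dispensable in your argument, whereas the paper needs it for Step 3. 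As for what each approach buys: yours is more elementary and more robust, since it sidesteps the paper's delicate rank claims (which, as written, implicitly require $n-\sigma\ge 2$: the expansion $j^T_{n-\sigma}=\sum_i\frac{1}{n-\sigma-1}\alpha_i$ degenerates when $\sigma=n-1$), and it generalizes immediately to any graph with an equitable partition --- applied to $(K_{m,n})_S$ the same idea would produce the paper's cubic $p(\lambda)$ directly from a three-cell quotient. The paper's rank--nullity scheme, on the other hand, is the uniform three-step template it deliberately reuses across both Theorem~\ref{thm1} and Theorem~\ref{thm2}, which keeps the exposition of the five $(K_{m,n})_S$ cases parallel.
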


\textbf{Case 1.} $\sigma=0.$ Then, $A((K_n)_S)=A(K_n).$ This is the classical case where 
\[
\spec((K_n)_S) =\spec(K_n) = 
\begin{pmatrix}
n-1 & -1 \\
1 & n-1
\end{pmatrix}.
\]

\textbf{Case 2.} $0<\sigma<n.$ Then, 
\[
A((K_n)_S)= 
\left[
\renewcommand{\arraystretch}{1.6}
\begin{array}{c|c}
J_{\sigma \times \sigma} & J_{\sigma \times (n-\sigma)} \\
\hline
J_{(n-\sigma) \times \sigma} & J_{(n-\sigma) \times (n-\sigma)} - I_{n-\sigma}
\end{array}
\right] 
= \left[
\renewcommand{\arraystretch}{1.6}
\begin{array}{c}
B \\
\hline
C
\end{array}
\right].
\]
Clearly, $\rank(B)=1.$ On the other hand, $J_{(n-\sigma)\times(n-\sigma)} - I_{n-\sigma}$ is the adjacency matrix of $K_{n-\sigma},$ which follows from Case 1 there is no zero eigenvalue, thus it is invertible. Thus, all rows in $C$ are linearly independent. We claim that all rows of $C$ are linearly independent to $j_n^T$, a row of $B$.
Let $\alpha_i$ be the $i$-th row of the adjacency matrix of $K_{n-\sigma}.$ Then, $\{\alpha_1,\ldots, \alpha_{n-\sigma}\}$ is a basis for $\R^{n-\sigma}.$ Clearly, $j^T_{n-\sigma}=\sum^{n-\sigma }_{i=1}\frac{1}{n-\sigma-1}\alpha_i$ and this implies that no rows of $B$ is a linear combination of rows of $C.$ 
Thus, 
\[
\rank(A(K_n)_S) = n-\sigma +1, \quad \nul(A(K_n)_S)= \sigma -1.
\]
This completes Step 1. 

Now, we consider 
\[
A(K_n)_S + I_n 
=
\left[
\renewcommand{\arraystretch}{1.6}
\begin{array}{c|c}
J_{\sigma \times \sigma}+ I_\sigma & J_{\sigma \times (n-\sigma)} \\
\hline
J_{(n-\sigma) \times \sigma} & J_{(n-\sigma) \times (n-\sigma)} \end{array}
\right] 
= \left[
\renewcommand{\arraystretch}{1.6}
\begin{array}{c}
B \\
\hline
C
\end{array}
\right].
\]
We have $\rank(C)=1.$ Let $B'=J_{\sigma \times \sigma} + I_\sigma.$ Then,
\[
B'=J_{\sigma\times\sigma}+ I_\sigma 
= 2I_\sigma + A(K_\sigma).
\]
The eigenvalues of $B'$ are $\sigma +1$ with multiplicity $1$ and $1$ with multiplicity $\sigma -1$. Hence, $B'$ are invertible. By a similar argument as in Step 1, all rows of $B$ are linearly independent to $j_n^T$, a row of $C$. Thus, we have 
\[
\rank(A(K_n)_S + I_n)=\sigma +1, \quad \nul(A(K_n)_S +I_n) = n-\sigma-1.
\]
This completes Step 2. 

For Step 3, note that  $n-(\sigma-1)-(n-\sigma-1)=2.$ 
By Lemma~\ref{lambdasum}, we have 
\begin{align*}
\lambda_1 ((K_n)_S) + \lambda_2 ((K_n)_S) &= n-1, \\
\lambda^2_1 ((K_n)_S) + \lambda^2_2 ((K_n)_S) &= n^2-2n+2\sigma +1. 
\end{align*}
Solving this, we obtain
\[
\lambda_{1,2}((K_n)_S)= \frac{(n-1)\pm \sqrt{(n-1)^2+4\sigma}}{2}.
\]

As a conclusion, we have
\begin{equation}
\spec((K_n)_S)=
\begin{pmatrix}
\frac{(n-1)+ \sqrt{(n-1)^2+4\sigma}}{2} & 0 & -1  &  \frac{(n-1)- \sqrt{(n-1)^2+4\sigma}}{2} \\
1 & \sigma -1 & n-\sigma -1 & 1
\end{pmatrix}.        
\end{equation}

\textbf{Case 3.} $\sigma =n.$ Let $\widehat{K_n}=(K_n)_S.$ Then, $A(\widehat{K_n}) = A(K_n) + I_n.$ Thus, we have
\begin{equation}\label{eq:specknhat}
\spec(\widehat{K_n})=
\begin{pmatrix}
n & 0 \\
1 & n-1
\end{pmatrix}.        
\end{equation}

\subsection{Identification of $\spec((K_{m,n})_S)$}
\label{sec2.2}

In this subsection, $\spec((K_{m,n})_S)$ for arbitrary $S$ are identified according to $\sigma.$ Before that, we first show a crucial lemma.

\begin{lemma}
\label{3closedwalks}
Let $G=K_{m,n}$ with $S=S_M \cup S_N \subseteq V(G)$ with $S_M \subseteq M$ and $S_N \subseteq N.$ Let $|S_M|=\sigma_M,$  $|S_N|=\sigma_N,$ and $|S|=\sigma=\sigma_M+\sigma_N$. If $\lambda_1(G_S), \ldots ,\lambda_n(G_S)$ are the eigenvalues of $G_S,$ then 
\begin{align*}
\sum^n_{i=1} \lambda^3_i(G_S) 
= 3(m \sigma_N + n\sigma_M)+\sigma.
\end{align*}
\end{lemma}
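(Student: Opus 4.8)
The plan is to express the power sum $\sum_{i=1}^n \lambda_i^3(G_S)$ as the trace of the cube of the adjacency matrix. Since $A(G_S)$ is a real symmetric matrix, the spectral theorem gives $\sum_{i=1}^n \lambda_i^3(G_S) = \operatorname{tr}\bigl(A(G_S)^3\bigr)$, so it suffices to evaluate this trace. I would split the matrix as $A(G_S) = B + D$, where $B = A(K_{m,n})$ is the ordinary (zero-diagonal) bipartite adjacency matrix and $D = J_S$ is the diagonal $0$--$1$ loop matrix with $D_{ii} = 1$ exactly when $v_i \in S$.

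Next, I would expand the cube and use the cyclic invariance of the trace. Writing out $(B+D)^3$ and collecting terms via $\operatorname{tr}(XYZ) = \operatorname{tr}(ZXY)$, one obtains
\begin{align*}
\operatorname{tr}\bigl((B+D)^3\bigr) = \operatorname{tr}(B^3) + 3\operatorname{tr}(B^2 D) + 3\operatorname{tr}(B D^2) + \operatorname{tr}(D^3).
\end{align*}
Then each of the four terms is evaluated using the structure of $B$ and $D$. Since $K_{m,n}$ is bipartite and hence triangle-free, $B^3$ has zero diagonal and $\operatorname{tr}(B^3) = 0$. Because $D$ is a diagonal $0$--$1$ matrix it is idempotent, $D^2 = D$, so $\operatorname{tr}(D^3) = \operatorname{tr}(D) = \sigma$; and $\operatorname{tr}(BD^2) = \operatorname{tr}(BD) = \sum_i B_{ii} D_{ii} = 0$ since $B$ has zero diagonal.

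The remaining and decisive term is $\operatorname{tr}(B^2 D) = \sum_{i} (B^2)_{ii} D_{ii} = \sum_{v_i \in S} (B^2)_{ii}$. Here $(B^2)_{ii}$ counts walks of length two from $v_i$ to itself in $K_{m,n}$, which equals $\deg_{K_{m,n}}(v_i)$: this is $n$ when $v_i \in M$ and $m$ when $v_i \in N$. Summing over $S = S_M \cup S_N$ gives $\sum_{v_i \in S}(B^2)_{ii} = n\sigma_M + m\sigma_N$. Assembling the four contributions yields $\operatorname{tr}\bigl(A(G_S)^3\bigr) = 3(n\sigma_M + m\sigma_N) + \sigma$, which is the claimed identity.

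The computation is essentially routine once the decomposition is set up; the only point demanding care is the bookkeeping in the cube expansion, specifically justifying that the three cross terms of type $B^2D$ coincide under the trace (yielding the factor $3$) and correctly identifying $(B^2)_{ii}$ with the degree split according to the bipartition. As a combinatorial sanity check, one can instead read $\operatorname{tr}(A(G_S)^3)$ directly as a sum over closed walks $v_i \to v_j \to v_k \to v_i$ of length three: the all-distinct walks would form triangles and vanish by bipartiteness, the constant-vertex walks contribute $\sigma$, and each walk using one loop at a looped vertex together with an incident edge traversed back and forth accounts for the term $3(n\sigma_M + m\sigma_N)$, the factor $3$ arising from the three cyclic positions the loop may occupy.
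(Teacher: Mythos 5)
Your proof is correct, but it takes a genuinely different route from the paper. The paper invokes the combinatorial fact that $\sum_i \lambda_i^3(G_S)$ equals the number of closed walks of length $3$ in $G_S$ (citing Biggs) and then enumerates those walks directly, splitting into cases according to whether the walk starts in $M$ or in $N$ and whether it uses a triple self-loop or one loop plus an edge traversed back and forth; the factor $3$ emerges from tallying the two walks that start at the looped vertex together with the one walk that starts at its neighbour. You instead work purely algebraically: writing $A(G_S)=B+D$ with $B=A(K_{m,n})$ and $D=J_S$, expanding $\operatorname{tr}\bigl((B+D)^3\bigr)$, and using cyclic invariance of the trace so that the factor $3$ falls out automatically from the three words $B^2D$, $BDB$, $DB^2$, while $\operatorname{tr}(B^3)=0$ by triangle-freeness, $\operatorname{tr}(BD^2)=0$ by the zero diagonal of $B$, and $\operatorname{tr}(D^3)=\sigma$ by idempotence of $D$. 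Your approach has two advantages: it sidesteps the slightly delicate question of what ``closed walk of length $3$'' means in a graph with loops (the cited result in Biggs is stated for ordinary graphs, so the paper is implicitly extending it), and the bookkeeping is mechanical rather than case-by-case, so there is less room for miscounting. The paper's walk-counting argument, on the other hand, is more combinatorially transparent and explains \emph{why} each term appears. Both are valid; your identification $(B^2)_{ii}=\deg_{K_{m,n}}(v_i)$, equal to $n$ on $M$ and $m$ on $N$, is exactly the point where the bipartite structure enters, mirroring the paper's split of the count into $n\sigma_M$ and $m\sigma_N$ contributions.
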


\begin{proof}
By \cite[Lemma 2.5 \& Result 2h]{biggs1993algebraic}, $  \sum^n_{i=1} \lambda^3_i(G_S)$ is the number of closed walks of length 3. Generally, a closed walk of length 3 in $G_S$ is either a triple self-looping or three walks that involve two edges and a loop.  Without loss of generality, we write vertices that have a loop by $\mathring{v},$ vertices without loop by $\bar{v}$, 
and $\ell$ as a loop. A direct counting of total number of closed walks of length 3 in $G_S$ is as follows. 

\textbf{Case 1.} Starting from a $v \in M$ that is incident with $u \in N$ via an edge $e.$
\begin{enumerate}
    \item For $\mathring{v},$ there are only two closed walks of length 3: $\mathring{v} \ell \mathring{v} e u e \mathring{v}$ and $\mathring{v} e u e \mathring{v} \ell \mathring{v}.$ This gives a total of $2n\sigma_M$ closed walks of length 3.
    \item For $\bar{v},$ it has to be incident with $\mathring{u} \in N$. There is only one such walk: $\bar{v} e \mathring{u} \ell \mathring{u}  e \bar{v}.$  This gives a total of $m\sigma_N$ closed walks of length 3.
\end{enumerate}
Thus, totally there are $2n\sigma_M +m\sigma_N$ closed walks of length 3.

\textbf{Case 2.}  Starting from $u \in N$ that is incident with $\mathring{v}$ or $\bar{v} \in M$ via an edge $e.$  Similar to Case 1, there is a total of $2m\sigma_N +n\sigma_M$ closed walks of length 3.

\textbf{Case 3.} Triple self-looping at $\mathring{v} \in M$ or $\mathring{u} \in N.$ There are $\sigma$ triple self-loopings. 

Thus, the total number of closed walks of length 3 is $3(m\sigma_N+n\sigma_M)+\sigma.$   
\end{proof} 


We are now ready to characterize the spectrum of $G_S.$

\begin{theorem}\label{thm2}
Let $(K_{m,n})_S$ be the self-loop graph of $K_{m,n}$ for $S \subseteq V(K_{m,n})$ with $|S|=\sigma.$ 
Assume that, if $0<\sigma \leq m,$ all loops are within $M,$ and if $m<\sigma \leq m+n,$ there are $m$ loops in $M$ and $\sigma-m$ loops in $N,$ respectively.   
Then, the spectrum $\spec((K_{m,n})_S)$ are determined by the following five cases:
\end{theorem}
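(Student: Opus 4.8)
The plan is to follow the three-step rubric laid out in the paper, adapting it to the bipartite setting where the relevant eigenvalues to isolate first are $0$ and $-1$ (rather than $0$ and $1$ as in the $K_n$ case), and then to split into the five cases according to the size of $\sigma$ relative to $m$ and $n$. Writing $G=(K_{m,n})_S$ with the normalization that loops fill $M$ first, the adjacency matrix takes the block form
\[
A(G_S)=
\left[
\renewcommand{\arraystretch}{1.4}
\begin{array}{c|c}
D_M & J_{m\times n}\\
\hline
J_{n\times m} & D_N
\end{array}
\right],
\]
where $D_M$ and $D_N$ are diagonal $0/1$ matrices recording which vertices carry loops. The off-diagonal blocks are the all-ones matrices $J$, each of rank $1$, so the heart of \textbf{Step 1} is a rank computation: I would row-reduce by exploiting that all rows coming from looped vertices in $M$ are mutually parallel up to the single loop entry, and similarly in $N$, so that the rank of $A(G_S)$ is governed by how many genuinely independent rows survive. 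This yields the multiplicity $m_0$ of the eigenvalue $0$ via Rank--Nullity, and it is here that the case distinction on $\sigma$ first enters (whether $N$ has any loops, whether $M$ is entirely looped, etc.).

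For \textbf{Step 2} I would repeat the rank analysis on $A(G_S)+I_n$, whose diagonal blocks become $D_M+I_m$ and $D_N+I_n$; the looped/unlooped rows again collapse into a small number of independent directions, and the resulting nullity gives the multiplicity $m_1$ of the eigenvalue $-1$. The choice of $-1$ is dictated by the classical fact that $\spec(K_{m,n})$ carries eigenvalue $0$ with high multiplicity and $\pm\sqrt{mn}$ simply, so perturbing by loops should concentrate the nontrivial spectrum near those values; shifting by $+I_n$ is what exposes the $-1$ eigenspace as a kernel. After Steps 1 and 2, the number of unaccounted eigenvalues is $n+m-m_0-m_1$, which I expect to be $2$ in the generic cases and $3$ in the cases where loops straddle both $M$ and $N$.

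\textbf{Step 3} then closes the computation using the trace identities. When only two eigenvalues remain, Lemma~\ref{lambdasum} gives $\sum\lambda_i=\sigma$ and $\sum\lambda_i^2=2mn+\sigma$ (using that $K_{m,n}$ has $mn$ edges), and after subtracting the contributions of the already-known eigenvalues $0$ and $-1$ one solves a quadratic for the last pair. In the mixed-loop cases where three eigenvalues survive, the two equations from Lemma~\ref{lambdasum} are insufficient, and this is where Lemma~\ref{3closedwalks} is essential: the third-power trace $\sum\lambda_i^3=3(m\sigma_N+n\sigma_M)+\sigma$ supplies the missing equation, reducing the problem to solving a cubic whose three roots are the remaining eigenvalues.

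The main obstacle I anticipate is the bookkeeping in Step 1 and Step 2 across the five cases: the rank of $A(G_S)$ and of $A(G_S)+I_n$ depends delicately on whether a side contains both looped and unlooped vertices, since only then do two independent all-ones--type rows appear on that side. Getting $m_0$ and $m_1$ exactly right in each regime---so that the leftover count is precisely $2$ or $3$---is the crux; once that count is pinned down, Step 3 is a mechanical, if slightly lengthy, solution of a quadratic or cubic using the three trace identities. I would therefore organize the proof by first tabulating $(m_0,m_1)$ for each of the five cases, verifying consistency against $m_0+m_1+(\text{leftover})=m+n$, and only then invoke the trace relations to name the remaining eigenvalues.
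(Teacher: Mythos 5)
There is a genuine gap, and it sits exactly at the step you identify as the crux. The repeated eigenvalue that loops create in $(K_{m,n})_S$ is $+1$, not $-1$: if $v_i$ and $v_j$ are two \emph{looped} vertices on the same side, then $A(G_S)(e_i-e_j)=e_i-e_j$, since the common all-ones part of their rows cancels and only the diagonal loop entries survive; differences of two \emph{unlooped} vertices on the same side give eigenvalue $0$. By contrast, $-1$ is generically not an eigenvalue of $(K_{m,n})_S$ at all (it is in the $(K_n)_S$ case, where the unlooped vertices span a clique $K_{n-\sigma}$ --- you have the two cases exactly swapped). Consequently your Step 2 computes $\nul(A(G_S)+I_{m+n})=0$, so in the case $0<\sigma<m$ the number of unaccounted eigenvalues is $m+n-m_0-0=\sigma+2$, which for $\sigma\geq 2$ exceeds the three trace identities available from Lemma~\ref{lambdasum} and Lemma~\ref{3closedwalks}, and Step 3 cannot close. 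The heuristic you give for choosing $-1$ (that the loop perturbation should concentrate spectrum near the classical eigenvalues $0,\pm\sqrt{mn}$) does not support the choice: the classical spectrum contains no $-1$, and nothing forces a high-multiplicity eigenspace there. The fix is to run Step 2 on $A(G_S)-I_{m+n}$; the resulting multiplicities of the eigenvalue $1$ are $\sigma-1$ (for $0<\sigma<m$), $m-1$ (for $\sigma=m$), and $\sigma-2$ (for $m<\sigma<m+n$), after which your Step 3 goes through verbatim.

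A secondary miscount: you expect $2$ leftover eigenvalues ``in the generic cases'' and $3$ only ``where loops straddle both $M$ and $N$.'' In fact the leftover count is $3$ whenever \emph{one side contains both looped and unlooped vertices} --- in particular for $0<\sigma<m$, where all loops lie in $M$ --- and it is $2$ only in the homogeneous cases $\sigma\in\{0,m,m+n\}$. Equivalently, the count equals the number of distinct vertex classes (looped-in-$M$, unlooped-in-$M$, looped-in-$N$, unlooped-in-$N$) that are nonempty, minus the repetitions absorbed into the eigenvalues $0$ and $1$. So the cubic from Lemma~\ref{3closedwalks} is needed already in the second case, not just in the mixed-loop case. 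With these two corrections (shift by $-I$ rather than $+I$, and the corrected bookkeeping of when three eigenvalues remain), your architecture --- rank computations for $m_0,m_1$, consistency check $m_0+m_1+\text{leftover}=m+n$, then the power-sum identities --- is precisely that of the paper.
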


For convenience, we let $G=K_{m,n}$. Recall that the adjacency matrix of $G$ is given by 
\begin{equation}\label{eq:kmn1}
A(G) = 
\left[
\renewcommand{\arraystretch}{1.6}
\begin{array}{c|c}
\textbf{0}_{m} & J_{m \times n} \\
\hline
J_{n \times m} & \textbf{0}_{n} \end{array}
\right]. 
\end{equation}

\textbf{Case 1. $\sigma =0$.}
It is clear that $A(G_S)=A(G)$ and so 
\[
\spec(G_S)=\spec(K_{m,n})=
\begin{pmatrix}
\sqrt{mn} & 0 & -\sqrt{mn} \\
1 & m+n-2 & 1
\end{pmatrix}.
\]

\textbf{Case 2. $0<\sigma < m$.}
The adjacency matrix of $G_S$ is
\begin{equation*}
\label{eq:AS1}
A(G_S)= 
\left[
\renewcommand{\arraystretch}{1.6}
\begin{array}{c|c}
J_S & J_{m \times n} \\
\hline
J_{n \times m} &  \textbf{0}_{n} \end{array}
\right] 
= 
\left[
\renewcommand{\arraystretch}{1.6}
\begin{array}{ccc}
I_\sigma & \textbf{0}_{\sigma \times (m-\sigma)} & J_{\sigma \times n}\\
\hline
\textbf{0}_{(m-\sigma) \times \sigma}& 
\textbf{0}_{m-\sigma} & J_{(m-\sigma) \times n} \\
\hline 
J_{n \times \sigma} & J_{n \times (m-\sigma)}  & \textbf{0}_{n} \\
\end{array}
\right] 
= 
\left[
\renewcommand{\arraystretch}{1.6}
\begin{array}{c}
B\\
\hline
C\\
\hline 
D\\
\end{array}
\right].
\end{equation*}

We proceed with Step 1.
It is straightforward to observe that both submatrices $C$ and $D$ have rank 1 due to repeated rows. One verifies that 
$\rank(A(G_S))= \sigma +2$ and we obtain the eigenvalue $0$ with multiplicity  $\nul(A)=m+n-\sigma -2.$

For Step 2, consider the matrix $A(G_S) - I_{m+n},$ which can be viewed in three parts as well: 
\begin{equation*}
\label{eq:AS2}
A(G_S)-I_{m+n}= 
\left[
\renewcommand{\arraystretch}{1.6}
\begin{array}{ccc}
\textbf{0}_\sigma & \textbf{0}_{\sigma \times (m-\sigma)} & J_{\sigma \times n}\\
\hline
\textbf{0}_{(m-\sigma) \times \sigma}& 
-I_{m-\sigma} & J_{(m-\sigma) \times n} \\
\hline 
J_{n \times \sigma} & J_{n \times (m-\sigma)}  & -I_{n} \\
\end{array}
\right] 
= 
\left[
\renewcommand{\arraystretch}{1.6}
\begin{array}{c}
B\\
\hline
C\\
\hline 
D\\
\end{array}
\right]. 
\end{equation*}

One checks that $\rank(B)=1,$ $\rank(C)=m-\sigma,$ and $\rank(D)=n.$ 
Thus, $\rank(A(G_S) - I_{m+n})=m+n -\sigma +1.$ So, $G_S$ has the eigenvalue $1$ with multiplicity $\nul(A(G_S)-I_{m+n})=\sigma -1.$ 

For Step 3, there are $m+n-\nul(A(G_S))-\nul(A(G_S)-I_{m+n})=3$  eigenvalues $\lambda_i=\lambda_i(G_S),$ $i=1,2,3,$ yet to be determined. Now, by Lemma~\ref{lambdasum} and Lemma~\ref{3closedwalks}, we obtain 
\begin{align*}
\lambda_1 + \lambda_2 + \lambda_3 &= 1, \\
\lambda^2_1 + \lambda^2_2 + \lambda^2_3 &= 2mn+1, \\
\lambda^3_1 + \lambda^3_2 + \lambda^3_3 &= 3n\sigma +1.
\end{align*}
By a fundamental property of a cubic polynomial of $\lambda_i,$ $i=1,2,3,$ we write 
\begin{equation*}\label{eq:plambda1} 
p(\lambda)
=(\lambda- \lambda_1)(\lambda- \lambda_2)(\lambda- \lambda_3)  
=\lambda^3+a\lambda^2+b\lambda+c. 
\end{equation*} 
Solving the coefficients yield 
\begin{align*}\label{eq:plambda2} 
a&
=-1, \quad
b
= \frac{1}{2}\left(\left(\sum^3_{i=1}\lambda_i\right)^2 - \sum^3_{i=1}\lambda_i^2\right)= -mn, \\
c&= \frac{(\sum^3_{i=1}\lambda_i)^3 - (\sum^3_{i=1}\lambda_i^3) - 3(\sum^3_{i=1}\lambda_i)(\sum_{i\neq j}\lambda_i\lambda_j) }{3} = n(m-\sigma). \nonumber
\end{align*}  

Thus, $\lambda_1,\lambda_2,\lambda_3$ are exactly the roots of $p(\lambda)=\lambda^3-\lambda^2-mn\lambda + n(m-\sigma).$ Note that $p(0)>0,$ $p(1)=-n\sigma <0,$ $\lim_{\lambda\to -\infty} p(\lambda)= -\infty,$ and $\lim_{\lambda \to +\infty}p(\lambda) = +\infty.$ Thus, by Intermediate Value Theorem, the three roots of $p(\lambda)$ are in the intervals $(-\infty,0), (0,1),$ and $(1,+\infty),$ respectively. Hence, $\lambda_1,\lambda_2,$ and $\lambda_3$ are all distinct with multiplicity 1.


As a conclusion, when $0<\sigma<m,$ we have
\[
\spec(G_S) = 
\begin{pmatrix}
1 & 0 &  \lambda_1 & \lambda_2 & \lambda_3\\
\sigma -1 & m+n-\sigma -2 & 1 & 1 & 1
\end{pmatrix}
\] 
where $\lambda_1,\lambda_2,\lambda_3$ are the roots of  $p(\lambda)=\lambda^3-\lambda^2-mn\lambda+n(m-\sigma).$

\textbf{Case 3. $\sigma = m$.}
From \eqref{eq:AS1}, we have 
\[
A(G_S)= 
\left[
\renewcommand{\arraystretch}{1.6}
\begin{array}{c|c}
I_{m} & J_{m \times n}\\
\hline
J_{n \times m} &  \textbf{0}_{n}
\end{array}
\right]. 
\]
Clearly, we have $\rank(A(G_S))=m+1$ and $\nul(A(G_S))=n-1.$ Hence, $G_S$ has the eigenvalue $0$ with multiplicity $n-1.$ Step 1 is complete.

Next, we consider 
\[
A(G_S)-I_{m+n} = 
\left[
\renewcommand{\arraystretch}{1.6}
\begin{array}{c|c}
\textbf{0}_{m} & J_{m \times n}\\
\hline
J_{n \times m} &  -I_{n}
\end{array}
\right].
\]
Similarly, $\rank(A(G_S)-I_{m+n})=n+1$ and $\nul(A(G_S)-I_{m+n})=m-1.$ Thus, $G_S$ has the eigenvalue $1$ with multiplicity $m-1.$ Step 2 is now complete.

For Step 3, there are only $(m+n)-(n-1)-(m-1)=2$ eigenvalues $\lambda_i=\lambda_i(G_S),$ $i=1,2,$ left to be determined. By Lemma~\ref{lambdasum}, we have
\begin{align*}
\lambda_1+\lambda_2 &= 1 \\
\lambda^2_1+\lambda^2_2 &= 2mn+1.
\end{align*}
So, we have
\[
\lambda_{1,2} = \frac{1 \pm \sqrt{1+4mn}}{2}.
\]
As a conclusion, when $\sigma=m,$ we have 
\[
\spec(G_S)
=
\begin{pmatrix}
\frac{1+\sqrt{1+4mn}}{2}& 1 & 0 &
\frac{1-\sqrt{1+4mn}}{2}  \\
1 & m-1 & n-1 & 1
\end{pmatrix}.
\]

\textbf{Case 4. $m<\sigma<m+n$.}
In this case, the adjacency matrix is 
\begin{equation*}
A(G_S)= 
\left[
\renewcommand{\arraystretch}{1.6}
\begin{array}{ccc}
I_m & J_{m \times (\sigma-m)} & J_{m \times (m+n-\sigma)}\\
\hline
J_{(\sigma-m) \times m}& 
I_{(\sigma-m)} & \textbf{0}_{(\sigma-m) \times (m+n-\sigma)} \\
\hline 
J_{(m+n-\sigma)\times m} & \textbf{0}_{(m+n-\sigma) \times (\sigma-m)}  & \textbf{0}_{(m+n-\sigma)} \\
\end{array}
\right] 
= 
\left[
\renewcommand{\arraystretch}{1.6}
\begin{array}{c}
B\\
\hline
C\\
\hline 
D\\
\end{array}
\right].
\end{equation*}

For Step 1, it is clear that $\rank(D)=1$ and both $B$ and $C$ have full rank. Thus, $\rank(A(G_S))=\sigma+1$ and $G_S$ has the eigenvalue $0$ with multiplicity $m+n-\sigma -1.$ For Step 2, consider the matrix
\begin{equation*}
A(G_S)-I_{m+n}= 
\left[
\renewcommand{\arraystretch}{1.6}
\begin{array}{ccc}
\textbf{0}_m & J_{m \times (\sigma-m)} & J_{m \times (m+n-\sigma)}\\
\hline
J_{(\sigma-m) \times m}& 
\textbf{0}_{(\sigma-m)} & \textbf{0}_{(\sigma-m) \times (m+n-\sigma)} \\
\hline 
J_{(m+n-\sigma)\times m} & \textbf{0}_{(m+n-\sigma) \times (\sigma-m)}  & -I_{(m+n-\sigma)} \\
\end{array}
\right] 
= 
\left[
\renewcommand{\arraystretch}{1.6}
\begin{array}{c}
B\\
\hline
C\\
\hline 
D\\
\end{array}
\right].
\end{equation*}
We have $\rank(B)=\rank(C)=1$ and $D$ has full rank. Thus, $\rank(A(G_S))=m+n-\sigma +2$ and $G_S$ has the eigenvalue 1 with multiplicity $\sigma-2.$

For Step 3, there are $(m+n)-(m+n-\sigma-1)-(\sigma-2)=3$  eigenvalues $\lambda_i=\lambda_i(G_S),$ $i=1,2,3,$ left to be determined. Proceed similarly as in Case 2, we apply Lemma~\ref{lambdasum} and Lemma~\ref{3closedwalks} to get
\begin{align*}
\lambda_1 + \lambda_2 + \lambda_3 &= 2, \\
\lambda^2_1 + \lambda^2_2 + \lambda^2_3 &= 2mn+2, \\
\lambda^3_1 + \lambda^3_2 + \lambda^3_3 &= 3m(\sigma + n-m) +2.
\end{align*}
The corresponding cubic polynomial is $p(\lambda)=\lambda^3-2\lambda^2+(1-mn)\lambda+m(m+n-\sigma).$ Using the method as in Case 2, these eigenvalues are distinct, each has multiplicity 1. 
As a conclusion, when $m<\sigma<m+n,$ we have
\[
\spec(G_S) = 
\begin{pmatrix}
1 & 0 & \lambda_1 & \lambda_2 & \lambda_3\\
\sigma-2 & m+n-\sigma-1  & 1 & 1 & 1
\end{pmatrix},
\] 
where $\lambda_1,\lambda_2,\lambda_3$ are the roots of  $p(\lambda)=\lambda^3-2\lambda^2+(1-mn)\lambda+m(m+n-\sigma).$

\textbf{Case 5. $\sigma=m+n$.}
In this case, the adjacency matrix takes the form $A(G_S)= A(G)+I_{m+n}.$ Thus, its spectrum can be obtained by shifting from Case 1:
\[
\spec(G_S)=
\begin{pmatrix}
1+\sqrt{mn} & 1 & 1- \sqrt{mn} \\
1 & m+n-2 & 1
\end{pmatrix}.
\]


\bigskip
Now, we are ready to prove the main results of this section. Recall that 
if the eigenvalues are in non-increasing order, then we have the Courant-Weyl Inequality:
\begin{theorem}\cite[Theorem 1.3.15]{cvetkovic2010introduction}
Let $A$ and $B$ be $n\times n$ real symmetric matrix. Then
\begin{align*}
&\lambda_i(A+B)\leq \lambda_j(A)+ \lambda_{i-j+1}(B) \quad \text{ for } n\geq i\geq j\geq 1,\\
&\lambda_i(A+B)\geq \lambda_j(A)+ \lambda_{i-j+n}(B) \quad \text{ for } 1\leq i\leq j\leq n.
\end{align*}
\end{theorem}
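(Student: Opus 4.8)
The plan is to derive both families of inequalities from the Courant--Fischer min-max characterization of the eigenvalues of a real symmetric matrix, so I would first recall that for a real symmetric $n \times n$ matrix $M$ with eigenvalues $\lambda_1(M) \geq \cdots \geq \lambda_n(M)$,
\begin{align*}
\lambda_k(M) = \max_{\substack{V \subseteq \R^n \\ \dim V = k}} \; \min_{\substack{x \in V \\ x \neq 0}} \frac{x^T M x}{x^T x} = \min_{\substack{V \subseteq \R^n \\ \dim V = n-k+1}} \; \max_{\substack{x \in V \\ x \neq 0}} \frac{x^T M x}{x^T x}.
\end{align*}
Everything then reduces to a dimension count together with the additivity of the Rayleigh quotient, $x^T(A+B)x = x^T A x + x^T B x$.

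For the first (upper) inequality, fix $n \geq i \geq j \geq 1$ and let $\{u_1,\ldots,u_n\}$ and $\{w_1,\ldots,w_n\}$ be orthonormal eigenbases of $A$ and $B$ ordered by decreasing eigenvalue. I would set $U = \mathrm{span}\{u_j,\ldots,u_n\}$ and $W = \mathrm{span}\{w_{i-j+1},\ldots,w_n\}$, of dimensions $n-j+1$ and $n-i+j$, respectively. On $U$ the Rayleigh quotient of $A$ is at most $\lambda_j(A)$, and on $W$ the Rayleigh quotient of $B$ is at most $\lambda_{i-j+1}(B)$. The decisive step is the dimension count $\dim(U \cap W) \geq (n-j+1)+(n-i+j)-n = n-i+1$, so $U \cap W$ contains a subspace $V$ of dimension exactly $n-i+1$ on which $x^T(A+B)x \leq \big(\lambda_j(A)+\lambda_{i-j+1}(B)\big)x^T x$. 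Substituting this particular $V$ into the second (min-max) expression for $\lambda_i(A+B)$ immediately gives $\lambda_i(A+B) \leq \lambda_j(A)+\lambda_{i-j+1}(B)$.

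For the second (lower) inequality I would avoid repeating the argument and instead apply the first inequality to $-A$ and $-B$, using $\lambda_k(-M) = -\lambda_{n+1-k}(M)$. Writing the upper bound for $\lambda_{i'}(-A-B)$ valid for $n \geq i' \geq j' \geq 1$ and then setting $i' = n+1-i$ and $j' = n+1-j$ turns the constraint into $1 \leq i \leq j \leq n$ and reproduces exactly $\lambda_i(A+B) \geq \lambda_j(A)+\lambda_{i-j+n}(B)$; the index on $B$ lands correctly because $n-i'+j' = n+i-j$.

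The only genuine obstacle is bookkeeping: choosing the two eigenspaces with the right dimensions so that their intersection is forced to be at least $(n-i+1)$-dimensional, and tracking the index shifts through the reflection $M \mapsto -M$. There is no analytic content beyond Courant--Fischer. As a consistency check, taking $j=i$ in the first inequality recovers Weyl's bound $\lambda_i(A+B) \leq \lambda_i(A)+\lambda_1(B)$, and $j=i$ in the dual gives $\lambda_i(A+B) \geq \lambda_i(A)+\lambda_n(B)$.
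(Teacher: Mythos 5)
Your proof is correct: the dimension count $\dim(U\cap W)\ge (n-j+1)+(n-i+j)-n = n-i+1$ is exactly the right pivot, and the index bookkeeping in the reflection $M\mapsto -M$ (giving $n-i'+j'=n+i-j$, hence the subscript $i-j+n$ on $B$) checks out. Note that the paper does not prove this statement at all — it is quoted verbatim from \cite[Theorem 1.3.15]{cvetkovic2010introduction} — and your Courant--Fischer argument is the standard proof of these Courant--Weyl inequalities, essentially the one found in that reference, so there is nothing to reconcile between your route and the paper's.
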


By choosing $i=j=n$ in the second inequality above, we obtain the inequality $\lambda_{\min}(A+B)\geq \lambda_{\min}(A)+ \lambda_{\min}(B)$ which is essential in the proof of our next theorem.
\begin{theorem}
Let $G$ be a self-loop graph of order $n$ and has eigenvalues $\lambda_1(G) \geq \lambda_2(G)\geq\cdots\geq \lambda_n(G)$.
\begin{itemize}
    \item[(i)] If $\lambda_i(G)>0$, for $i=1,2,\ldots,n$, then $G$ is disjoint unions of $n$ $\widehat{K_1}$.
    \item[(ii)] If $\lambda_i(G)\geq 0$, for $i=1,2,\ldots,n$, then every connected component of $G$ is either $K_1$ or $\widehat{K_r}$ for some $r\in\{1,2,\ldots,n\}$.
\end{itemize}
\end{theorem}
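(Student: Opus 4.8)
The plan is to translate the eigenvalue hypotheses into definiteness of the real symmetric matrix $A(G)$ and then exploit small principal submatrices. Since all eigenvalues of $A(G)$ are real, the condition $\lambda_i(G)\ge 0$ for all $i$ is equivalent to $A(G)$ being positive semidefinite (PSD), and $\lambda_i(G)>0$ for all $i$ is equivalent to $A(G)$ being positive definite. Because $A(G)$ is block diagonal with one block per connected component, it is positive (semi)definite if and only if every block is; hence it suffices to describe the connected components of $G$. I would also record the elementary fact that the diagonal entry of $A(G)$ at a vertex $v$ equals $1$ if $v$ carries a loop and $0$ otherwise.

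First I would establish an edge constraint from $2\times 2$ principal minors. If $\{u,v\}$ is an edge and at least one of $u,v$ is unlooped, the corresponding $2\times2$ principal submatrix is one of $\begin{pmatrix}1&1\\1&0\end{pmatrix}$ or $\begin{pmatrix}0&1\\1&0\end{pmatrix}$, each of determinant $-1<0$; this contradicts positive semidefiniteness, since every principal submatrix of a PSD matrix is PSD. Hence, whenever $A(G)$ is PSD, every edge of $G$ joins two looped vertices. In a connected component $H$ with at least two vertices, connectivity forces every vertex to be incident to an edge, so every vertex of $H$ is looped; that is, $H=\widehat{H_0}$ for a connected simple graph $H_0$, and $A(H)$ has all-ones diagonal.

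Next I would rule out induced paths $P_3$. For such a fully looped component, an induced $P_3$ on vertices $i\sim j\sim k$ with $i\not\sim k$ produces the $3\times3$ principal submatrix $\begin{pmatrix}1&1&0\\1&1&1\\0&1&1\end{pmatrix}$, whose determinant is $-1<0$, again contradicting PSD. Thus $H_0$ is $P_3$-free, and a connected $P_3$-free graph is complete, so $H_0=K_r$ and $H=\widehat{K_r}$. The remaining components carry no edges and are therefore single vertices, each $K_1$ or $\widehat{K_1}$; noting that $\widehat{K_1}$ is the $r=1$ instance of $\widehat{K_r}$, every component is $K_1$ or $\widehat{K_r}$, which proves (ii). The step I expect to be most delicate is precisely this reduction: verifying that definiteness is genuinely inherited componentwise and that ``$P_3$-free and connected'' forces completeness. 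Both are standard, but they deserve a clean statement, and the $3\times3$ minor computation is what makes the $P_3$-free step self-contained (avoiding any appeal to the general smallest-eigenvalue characterization).

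Finally, (i) follows from (ii). If all eigenvalues are positive then $A(G)$ is in particular PSD, so each component is $K_1$ or $\widehat{K_r}$. By Case 3 of Theorem~\ref{thm1}, $\spec(\widehat{K_r})$ contains the eigenvalue $0$ with multiplicity $r-1$, and $K_1$ has the single eigenvalue $0$; hence any component other than $\widehat{K_1}$ contributes a zero eigenvalue, which is excluded by strict positivity. Therefore every component is $\widehat{K_1}$, and $G$ is the disjoint union of $n$ copies of $\widehat{K_1}$.
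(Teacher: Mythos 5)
Your proof is correct, and it takes a genuinely different route from the paper's. The paper proves that each component's underlying simple graph $H_0$ is complete by assuming an induced $P_3$, invoking the Interlacing Theorem to get $\lambda_{\min}(H_0)\leq\lambda_{\min}(P_3)=-\sqrt{2}$, and then applying the Courant--Weyl inequality $\lambda_{\min}(H_0)\geq\lambda_{\min}(H)+\lambda_{\min}(-J_S)$ to force $\lambda_{\min}(H)\leq-\sqrt{2}+1<0$; it then finishes by appealing to the \emph{full} classification of $\spec((K_r)_S)$ in Theorem~\ref{thm1} (including the partial-loop case $0<\sigma<r$, whose smallest root $\frac{(r-1)-\sqrt{(r-1)^2+4\sigma}}{2}$ is negative) to see which looped complete graphs survive. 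You instead phrase everything as positive (semi)definiteness and use nonnegativity of principal minors: the $2\times 2$ minors first pin down the loop structure (every edge joins two looped vertices, so any component containing an edge is fully looped), and only then does a single $3\times 3$ minor rule out induced $P_3$'s. This reorganization buys two things: the argument is elementary and self-contained (no interlacing theorem, no Courant--Weyl, no computation of $\lambda_{\min}(P_3)$), and because the loop set is determined \emph{before} completeness is used, you only need the trivial Case 3 of Theorem~\ref{thm1} (equivalently, the observation that $A(\widehat{K_r})=J_{r\times r}$ has rank $1$), rather than its full case analysis. The paper's approach, conversely, is shorter \emph{given} the machinery of Section~\ref{sec2} and the standard matrix inequalities, and its $P_3$-contradiction works uniformly without first sorting out where the loops sit. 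Both proofs ultimately rest on the same combinatorial fact that a connected graph with no induced $P_3$ is complete, which you correctly flag as needing a clean statement (the shortest-path argument supplies it).
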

\begin{proof}\phantom{aaa}
\begin{itemize}
    \item[(i)] Let $H$ be a connected component of $G$. We shall show that $H_0$ is a complete graph. Suppose on the contrary that $H_0$ is not a complete graph. Thus, $H_0$ contains $P_3$ as a vertex-induced subgraph. Note that $\lambda_{\min}(P_3)=-\sqrt{2}.$ So, by Interlacing Theorem (cf. \cite[Cor 1.3.12]{cvetkovic2010introduction}), we obtain $\lambda_{\min}(H_0)\leq -\sqrt{2}$. Note that $J_S=A(H)-A(H_0)$. By the Courant-Weyl Inequalities, we have $\lambda_{\min}(H_0)\geq \lambda_{\min}(H)+\lambda_{\min}(-J_S)$. This implies that 
    \[
    \lambda_{\min}(G) \leq \lambda_{\min}(H)\leq \lambda_{\min}(H_0)-\lambda_{\min}(-J_S)\leq -\sqrt{2}+1,
    \]
    this is a contradiction because $G$ has only positive eigenvalues. Therefore, $H_0$ is a complete graph. By Theorem~\ref{thm1}, $H=\widehat{K_1}$ and we are done.
    
    \item[(ii)] With a slight modification of the proof in Part (i), we can deduce that if $H$ is a connected component of $G$, then $H_0$ is a complete graph. Now, by Theorem~\ref{thm1}, $H=K_1$ or $H=\widehat{K_r}$ for some $r\in\{1,2,\ldots,n\}$. The proof is complete.
\end{itemize}
\end{proof}

In the following, we characterize self-loop graphs of order $n$ with a few distinct eigenvalues.

\begin{theorem}
Let $G$ be a self-loop graph of order $n$. Then,
\begin{enumerate}[(i)]
    \item $G$ has exactly one eigenvalue if and only if $G=\overline{K}_n$ or every connected component of $G$ is $\widehat{K_1}.$ 
    \item $G$ has exactly two distinct eigenvalues if and only if all connected components of $G$ are the same and each is $\widehat{K_r}$ for some $r,$ or each component of $G$ is either $K_1$ or $\widehat{K_r}.$
\end{enumerate}
\end{theorem}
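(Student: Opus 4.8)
My plan is to treat both parts through the connected components of $G$, using that $\spec(G)$ is the multiset union of the spectra of its components. Part (i) is fastest handled directly: $G$ has a single eigenvalue $\lambda$ precisely when the symmetric matrix $A(G)$ equals $\lambda I_n$; since the off-diagonal entries of $A(G)$ coincide with those of $A(G_0)$ and the diagonal entries lie in $\{0,1\}$, this forces $A(G_0)=0$ and $\lambda\in\{0,1\}$, i.e. either $G=\overline{K}_n$ (when $\lambda=0$) or every vertex carries a loop and $G$ is a disjoint union of copies of $\widehat{K_1}$ (when $\lambda=1$). The ``if'' direction of (ii) is then a direct reading of spectra via \eqref{eq:specknhat}: a disjoint union of copies of $\widehat{K_r}$ with a common $r\ge2$ has spectrum $\{r,0\}$; adjoining copies of $K_1$ only repeats the eigenvalue $0$; and a union of copies of $K_1$ and $\widehat{K_1}$ has spectrum $\{0,1\}$. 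In each listed family exactly two distinct eigenvalues occur.

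For the forward direction of (ii), suppose $G$ has exactly two distinct eigenvalues $\mu>\nu$. First I would observe that a connected component on one vertex is $K_1$ or $\widehat{K_1}$, contributing the eigenvalue $0$ or $1$. For a component $H$ on $r\ge2$ vertices, $A(H)$ is nonnegative and irreducible, so Perron--Frobenius makes its largest eigenvalue simple; moreover a connected graph on $r\ge2$ vertices cannot have scalar adjacency matrix, so $H$ has at least two, hence exactly the two, eigenvalues $\mu,\nu$, with $\mu$ simple and $\nu$ of multiplicity $r-1$.

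The heart of the proof is to identify each such $H$ as $\widehat{K_r}$. Since the minimal polynomial of $A(H)$ is $(\lambda-\mu)(\lambda-\nu)$, we have $A(H)^2=(\mu+\nu)A(H)-\mu\nu I$, and comparing off-diagonal entries shows that the number of length-two walks between distinct vertices $i,j$ equals $(\mu+\nu)A(H)_{ij}$; in particular two non-adjacent vertices share no common neighbour, which for connected $H$ forces $H_0=K_r$, so $H=(K_r)_S$. Combined with the nonnegativity of the eigenvalues, the preceding Theorem then identifies $H$ as $\widehat{K_r}$, after which matching eigenvalues across components yields the two stated cases: a value $r\ge2$ produced by one $\widehat{K_r}$ forces every nontrivial component to be that same $\widehat{K_r}$ (any other would introduce a third distinct eigenvalue), with $K_1$ components allowed since they only reproduce $0$, while the remaining possibility is that all components are $K_1$ or $\widehat{K_1}$ with spectrum $\{0,1\}$. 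The main obstacle is precisely this classification step: establishing that both eigenvalues are nonnegative---so that the preceding Theorem applies and loopless or partially-looped complete pieces are excluded---is the delicate point, and is where the completeness of $H_0$ obtained from the minimal-polynomial identity must be combined with control on the sign of $\nu$.
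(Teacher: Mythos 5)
Your part (i), the ``if'' direction of (ii), and the first half of your ``only if'' argument coincide with the paper's proof: the paper also passes to the minimal polynomial, writes $A^2-(\lambda_1+\lambda_2)A+\lambda_1\lambda_2 I_n=0$, and compares the $(r,k)$-entry across an induced path $v_rv_sv_k$ to conclude that the underlying simple graph $H_0$ of every component $H$ is complete. Up to the point where you obtain $H=(K_r)_S$, the two arguments are essentially identical (your part (i) is even slightly more direct than the paper's trace-and-integrality computation).

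The finishing step is where your proposal has a genuine gap, and the route you sketch cannot be completed, because the fact you hope to establish is false. You want to show both eigenvalues are nonnegative and then cite the preceding theorem on nonnegative spectra; but a disjoint union of copies of $K_r$ ($r\ge 2$, no loops) has exactly the two distinct eigenvalues $r-1$ and $-1$, and the connected, partially looped graph $(K_2)_{\{v\}}$ ($K_2$ with a loop at exactly one vertex) has exactly the two distinct eigenvalues $\frac{1\pm\sqrt{5}}{2}$, one of which is negative. So no argument can deliver $\nu\ge 0$, and the preceding theorem is not applicable. The paper finishes differently: once $H=(K_r)_S$, it invokes Theorem~\ref{thm1}, whose Case 2 says that for $0<\sigma<r$ the spectrum of $(K_r)_S$ consists of $0$ (multiplicity $\sigma-1$), $-1$ (multiplicity $r-\sigma-1$), and the two simple eigenvalues $\frac{(r-1)\pm\sqrt{(r-1)^2+4\sigma}}{2}$; for $r\ge 3$ this gives at least three distinct values, so $\sigma\in\{0,r\}$, i.e.\ $H=K_r$ or $H=\widehat{K_r}$, and one then matches spectra across components. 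Two further things this comparison exposes: the paper's own proof retains loopless $K_r$ as a possible component (so your plan to ``exclude loopless pieces'' was aiming at something false, and in fact the theorem's statement omits the all-$K_r$ case that its own proof produces); and the exceptional graph $(K_2)_{\{v\}}$, where $\sigma-1=r-\sigma-1=0$, has exactly two distinct eigenvalues and escapes both your argument and the paper's, so the classification in (ii) is itself incomplete as stated.
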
    

\begin{proof}
\begin{enumerate}[(i)]
    \item 
    Assume that $G$ has $\sigma$ loops and $a=\lambda_1(G)=\cdots =\lambda_n(G).$ Then, $\sigma=\mathrm{Tr}(A(G))=na.$  This implies that $a=\displaystyle \frac{\sigma}{n}.$ Since $a$ is a root of a monic polynomial with integer coefficients, it follows that $a$ is an integer. Since $0 \leq \sigma\leq n,$  we have $\sigma=0$ or $\sigma=n.$ 
    \begin{enumerate}[(a)]
        \item If $\sigma=0,$ then $a=0.$ This implies $A(G)=\textbf{0}_n.$ Thus, $G=\overline{K}_n.$
        \item If $\sigma=n,$ then $a=1.$ So, $A(G)-I_n$ is the adjacency matrix of an ordinary graph with only eigenvalue $0.$ It follows that  $A(G)-I_n=\textbf{0}_n,$ that is, $A(G)=I_n$ and each component of $G$ is $\widehat{K_1}.$
    \end{enumerate} 
    \item  Let $A=A(G).$ If $G$ has two eigenvalues $\lambda_1$ and $\lambda_2,$ then the minimal polynomial of $A$ has the form 
    \[
    p(\lambda)=(\lambda-\lambda_1)(\lambda-\lambda_2).
    \]
    So, we have 
    \begin{equation}\label{eq:Asquare0}
    A^2-(\lambda_2+\lambda_2)A+\lambda_1\lambda_2I_n=\textbf{0}_n.
    \end{equation}
    Let $H$ be a connected component of $G.$ We show that $H_0$ is a complete graph. If $H_0$ is not a complete graph, then there is a vertex induced path of order 3 in $H_0$ such as $\widehat{P}_3:$ $v_rv_sv_k.$

Now, we have $(A^2)_{rk} > 0$ with $A_{rk}=0.$ This is a contradiction because the $(r,k)$-entry of \eqref{eq:Asquare0} is positive on the left, but the $(r,k)$-entry on the right side is 0. Thus, $H_0$ is a complete graph.

By Theorem~\ref{thm1}, $H=K_r$ or $H=\widehat{K_r}$ for some $r.$ If one of the components of $G$ is $K_r$ (resp. $\widehat{K_r}$), then the other components of $G$ should also be $K_r$ (resp. $\widehat{K_r}$) because otherwise $G$ has more than two distinct eigenvalues. If $r=1,$ then $G$ is union of finitely many $K_1$ or $\widehat{K_r}.$ The opposite direction is obvious and the proof is complete.
\end{enumerate}
\end{proof}

\section{Bipartite graphs and eigenvalues of self-loop graphs}
\label{sec3}    
   



In this section, we provide a necessary and sufficient condition for a graph $G$ being bipartite according to the eigenvalues of $G$ and $G_{V(G)\backslash S}$ for arbitrary $S \subseteq V(G).$ First, we give an observation on similarity.

\begin{lemma}
Let $G$ be a bipartite graph of part sizes $m$ and $n$ and $|S|=\sigma.$ Then, $J_S + A(G)$ is similar to the matrix $J_S-A(G).$
\end{lemma}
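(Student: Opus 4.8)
The plan is to exhibit an explicit conjugating matrix manufactured from the bipartite structure of $G$. Order the vertices so that the $m$ vertices of one part come first and the $n$ vertices of the other part come last; then the adjacency matrix takes the block off-diagonal form $A(G)=\left(\begin{smallmatrix} \textbf{0}_m & B \\ B^{T} & \textbf{0}_n\end{smallmatrix}\right)$ for some $m\times n$ matrix $B$. The central object is the signature matrix $D=\diag(I_m,-I_n)$, which is its own inverse, $D=D^{-1}$.

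First I would record the standard fact that conjugation by $D$ negates the adjacency matrix of a bipartite graph: a direct block computation gives $D\,A(G)\,D = -A(G)$, since flipping the sign of the second block of coordinates sends the off-diagonal blocks $B$ and $B^{T}$ to $-B$ and $-B^{T}$ while leaving the zero diagonal blocks untouched. Next, the crucial observation is that $J_S$ is a \emph{diagonal} matrix (with $1$'s on the diagonal at the positions of $S$ and $0$'s elsewhere), so it commutes with the diagonal matrix $D$; hence $D\,J_S\,D = J_S$. It is worth flagging that this holds for every $S\subseteq V(G)$ regardless of how the loops distribute between the two parts, so no case analysis on the location of $S$ is needed.

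Combining the two identities, I would conclude
\[
D\,(J_S+A(G))\,D = D\,J_S\,D + D\,A(G)\,D = J_S - A(G),
\]
which exhibits the desired similarity with conjugating matrix $D$. There is essentially no serious obstacle here; the proof reduces to the two elementary matrix identities above, and the only subtlety worth noting is that the argument uses nothing about $J_S$ beyond its being diagonal, which is exactly why the result is insensitive to where $S$ meets the bipartition.
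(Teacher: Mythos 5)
Your proof is correct and is essentially identical to the paper's: the paper conjugates by the same signature matrix $P=\left(\begin{smallmatrix} I_m & \textbf{0} \\ \textbf{0} & -I_n\end{smallmatrix}\right)$ and observes $P(J_S+A(G))P^{-1}=J_S-A(G)$. You merely spell out the two block computations ($P A(G) P = -A(G)$ and $P J_S P = J_S$, the latter because $J_S$ is diagonal) that the paper leaves implicit.
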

\begin{proof}
One can easily see that if
\[
P=
\begin{bmatrix}
I_m  & \textbf{0} \\
\textbf{0} & -I_n 
\end{bmatrix},
\]
then, by \eqref{eq:kmn1},
$
P(J_S + A(G))P^{-1} 
= P(J_S + A(G))P = J_S - A(G).
$
Thus, $J_S + A(G)$ is similar to $J_S - A(G).$
\end{proof}

\begin{theorem}
\label{result1}
Let $G$ be a bipartite graph of order $n$ and $|S|=\sigma.$ Then, $1 - \lambda_n(G_S) \geq \cdots \geq 1 - \lambda_1(G_S)$ are the eigenvalues of $G_{V(G)\backslash S},$  where $\lambda_1(G_S)\geq \cdots \geq \lambda_n(G_S)$ are the eigenvalues of $G_S.$
\end{theorem}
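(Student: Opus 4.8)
The plan is to reduce the claim to the similarity lemma just proved by exploiting the fact that the two diagonal loop-matrices $J_S$ and $J_{V(G)\backslash S}$ are complementary. The key algebraic observation is that, since every vertex of $G$ lies in exactly one of $S$ and $V(G)\backslash S$, the $0/1$ diagonal matrices satisfy
\[
J_S + J_{V(G)\backslash S} = I_n,
\]
so that $J_{V(G)\backslash S} = I_n - J_S$. Using the definition $A(G_{V(G)\backslash S}) = J_{V(G)\backslash S} + A(G)$, I would rewrite
\[
A(G_{V(G)\backslash S}) = I_n - J_S + A(G) = I_n - \bigl(J_S - A(G)\bigr).
\]
This is the crucial identity: it expresses the adjacency matrix of the complementary self-loop graph as a shift of $-(J_S - A(G))$.

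Next I would bring in the preceding lemma, which asserts that for bipartite $G$ the matrix $J_S + A(G) = A(G_S)$ is similar to $J_S - A(G)$. Similarity preserves the spectrum, so $J_S - A(G)$ has exactly the eigenvalues $\lambda_1(G_S) \geq \cdots \geq \lambda_n(G_S)$. Combining this with the identity above, the eigenvalues of $A(G_{V(G)\backslash S}) = I_n - (J_S - A(G))$ are obtained by the elementary spectral-mapping fact that subtracting a matrix from $I_n$ sends each eigenvalue $\mu$ to $1-\mu$. Hence the eigenvalues of $G_{V(G)\backslash S}$ are precisely the numbers $1 - \lambda_i(G_S)$ for $i = 1,\ldots,n$.

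Finally I would address the ordering. Since $\lambda_1(G_S) \geq \cdots \geq \lambda_n(G_S)$, applying the decreasing map $x \mapsto 1 - x$ reverses the inequalities, giving
\[
1 - \lambda_n(G_S) \geq \cdots \geq 1 - \lambda_1(G_S),
\]
which is exactly the asserted non-increasing listing of the spectrum of $G_{V(G)\backslash S}$. I do not anticipate a genuine obstacle here: the argument is essentially a one-line consequence of the complementarity identity together with the similarity lemma, and the only point requiring a little care is the bookkeeping of the index reversal when passing from the eigenvalues of $G_S$ to those of $G_{V(G)\backslash S}$. If one wanted to be fully explicit, the single ingredient to state clearly is that $J_S$ and $J_{V(G)\backslash S}$ are diagonal with disjoint supports summing to the full vertex set, which is what forces $J_S + J_{V(G)\backslash S} = I_n$.
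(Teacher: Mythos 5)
Your proposal is correct and is essentially the paper's own argument: both proofs rest on the complementarity identity $J_S + J_{V(G)\backslash S} = I_n$ together with the preceding similarity lemma for bipartite graphs, the only (cosmetic) difference being that you invoke the lemma for the set $S$ (so that $J_S - A(G)$ inherits the spectrum of $G_S$, then shift by $I_n$), whereas the paper invokes it for $V(G)\backslash S$ after writing $J_{V(G)\backslash S} - A(G) = I_n - A(G_S)$. The substance, including the index-reversal under $x \mapsto 1-x$, is identical.
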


\begin{proof}
Note that $A(G_S) = J_S + A(G)$ and $A(G_{V(G)\backslash S}) = J_{V(G) \backslash S} + A(G).$ Hence, we find
\[
J_{V(G) \backslash S} - A(G)
    = I_n -(J_S + A(G)) 
    = I_n - A(G_S).
\]
Thus, we have $1 -\lambda_i(G_S),$  $i=1, \ldots,n,$ the eigenvalues of $J_{V(G)\backslash S}- A(G),$ which coincide with the eigenvalues of $A(G_{V(G)\backslash S})$ by similarity in the previous lemma.  
\end{proof}

\begin{theorem}
\label{result2}
Let $G$ be a connected graph of order $n$. Let $G_S$ be its self-loop graph with eigenvalues $\lambda_1(G_S) \geq \cdots \geq \lambda_n(G_S)$ and $S \subseteq V(G).$ Then, the eigenvalues of $G_{V(G)\backslash S}$ are $1-\lambda_n(G_S) \geq \cdots \geq 1-\lambda_1(G_S)$ if and only if $G$ is bipartite. 
\end{theorem}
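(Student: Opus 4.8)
The plan is to prove the two implications separately. The forward implication, that bipartiteness of $G$ forces the eigenvalues of $G_{V(G)\backslash S}$ to be $1-\lambda_1(G_S),\ldots,1-\lambda_n(G_S)$, is exactly Theorem~\ref{result1}, so no new work is needed there. For the converse I would compare \emph{all} power-traces of two explicit matrices and then invoke the classical fact that a graph with no closed walk of odd length is bipartite.

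First I would translate the spectral hypothesis into a statement about traces. The assumption that the eigenvalues of $G_{V(G)\backslash S}$ are precisely $1-\lambda_1(G_S),\ldots,1-\lambda_n(G_S)$ says that $A(G_{V(G)\backslash S})$ and $I_n-A(G_S)$ have the same multiset of eigenvalues, hence the same characteristic polynomial, and therefore $\mathrm{Tr}\big(A(G_{V(G)\backslash S})^\ell\big)=\mathrm{Tr}\big((I_n-A(G_S))^\ell\big)$ for every integer $\ell\ge 1$.

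Next, writing $M=I_n-J_S$ and $A=A(G)$, and using $J_{V(G)\backslash S}=I_n-J_S$, I would record the two identities $A(G_{V(G)\backslash S})=M+A$ and $I_n-A(G_S)=M-A$. Here $M$ is a diagonal $0/1$ matrix and $A$ is the entrywise nonnegative, zero-diagonal adjacency matrix of $G$. Expanding both powers, $(M+A)^\ell-(M-A)^\ell=2\sum_w w$, where the sum runs over all words $w$ of length $\ell$ in the letters $M$ and $A$ that contain an odd number of $A$'s. Every such $w$ is a product of entrywise nonnegative matrices, so $\mathrm{Tr}(w)\ge 0$; moreover, when $\ell$ is odd the pure word $A^\ell$ is one of these terms. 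Hence for every odd $\ell$ the hypothesis yields $0=\mathrm{Tr}\big((M+A)^\ell\big)-\mathrm{Tr}\big((M-A)^\ell\big)\ge 2\,\mathrm{Tr}(A^\ell)\ge 0$, which forces $\mathrm{Tr}(A^{2k+1})=0$ for all $k\ge 0$.

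Finally I would conclude: since $\mathrm{Tr}(A^{2k+1})$ counts the closed walks of odd length $2k+1$ in $G$ and these vanish for every $k$, the graph $G$ has no odd cycle and is therefore bipartite. The main obstacle, and really the only idea, is the nonnegativity observation in the middle step: because $M$ and $A$ have nonnegative entries, the trace of every mixed word is nonnegative, so the pure term $\mathrm{Tr}(A^\ell)$ cannot be cancelled by the others and must itself vanish. Everything else is bookkeeping; note in particular that this direction uses only the nonnegativity of $M$ (not its precise $0/1$ pattern) and does not even require connectedness of $G$, which enters only through Theorem~\ref{result1} to match the two ordered lists cleanly.
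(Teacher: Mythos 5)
Your proof is correct, and it takes a genuinely different route from the paper's. The paper extracts from the hypothesis only one fact --- that the largest eigenvalue of $I_n - A(G_{V(G)\backslash S}) = J_S - A(G)$ equals $\lambda_1(G_S)$ --- and then runs an equality-case Rayleigh quotient argument: it picks a unit maximizer $z$ of $x^T(J_S - A(G))x$, notes $z^T(J_S-A(G))z \le |z|^T(J_S+A(G))|z| \le \lambda_1(G_S)$ so that equality holds throughout, invokes the Perron--Frobenius theorem (this is exactly where connectedness is used) to conclude that $|z|$ is a positive multiple of the Perron vector, hence $z_i \neq 0$ for all $i$, and finally shows that an odd cycle would produce an edge $v_iv_j$ with $z_i z_j > 0$, turning one of the inequalities strict --- a contradiction. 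Your argument instead uses the full strength of the hypothesis (equality of the two entire spectra, hence of all power traces) together with the entrywise nonnegativity of every word in $M = I_n - J_S$ and $A = A(G)$ to force $\mathrm{Tr}(A^{2k+1}) = 0$ for all $k$, i.e.\ no closed walks of odd length, hence no odd cycles. Each approach buys something the other does not: yours is more elementary (no Perron--Frobenius, no equality-case analysis of a quadratic form) and, as you correctly observe, does not need $G$ to be connected in this direction; the paper's proof, on the other hand, needs only the matching of the extreme eigenvalues $\lambda_{\min}(G_{V(G)\backslash S}) = 1-\lambda_1(G_S)$ rather than of the whole spectrum, but pays for that weaker hypothesis with the connectedness assumption. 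All the individual steps in your write-up check out: the passage from equal ordered lists to equal power traces is valid since both matrices are symmetric, the signed expansion $(M+A)^\ell-(M-A)^\ell=2\sum_w w$ over words with an odd number of $A$'s is correct, and the final reduction from vanishing odd traces to bipartiteness is the standard closed-walk characterization of odd cycles.
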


\begin{proof}
By Theorem~\ref{result1}, it suffices to prove that if the spectrum of $G_{V(G)\backslash S}$ is $1-\lambda_n(G_S) \geq \cdots \geq 1-\lambda_1(G_S),$ then $G$ is bipartite. 

Since $1-\lambda_1(G_S)$ is the smallest eigenvalues of $G_{V(G) \backslash S},$ it follows that the largest eigenvalue of the matrix $I_n-A(G_{V(G)\backslash S})$ is $\lambda_1(G_S).$ Also note that $I_n-A(G_{V(G)\backslash S})=I_n-(J_{V(G)\backslash S}+A(G))=J_S-A(G)$. Thus, we have 
\begin{align*}
\lambda_1(G_S)
&=\max_{||x||=1} x^T(J_S + A(G))x \\
&=\max_{||x||=1} x^T(I_n -A(G_{V(G) \backslash S})x\\
&=\max_{||x||=1} x^T(J_S - A(G))x.
\end{align*}

Suppose that $\lambda_1(G_S)=z^T(J_S-A(G))z$ for some $z$ with $\norm{z}=1.$ Define 
\[
|z|^T =
(|z_1|, 
|z_2|,  
\cdots, 
|z_n|).
\]
Then, we have 
\begin{equation*}
\lambda_1(G_S)=z^T(J_S - A(G))z \leq  |z|^T (J_S +A(G)) |z| \leq \lambda_1(G_S).
\end{equation*}
Thus, $z^T(J_S-A(G))z=|z|^T(J_S+A(G))|z|.$ This implies that $z^T(-A(G))z=|z|^T(A(G))|z|$, or equivalently
\begin{equation}
\label{eq:equality0}
\sum_{1\leq i,j\leq n}(-a_{ij})z_iz_j=\sum_{1\leq i,j\leq n}a_{ij}|z_i||z_j|.
\end{equation}
Observe that for each $1\leq i,j\leq n$, we have
\begin{equation}
\label{eq:inequality1}
(-a_{ij})z_iz_j\leq a_{ij}|z_i||z_j|.
\end{equation}
Since $\norm{|z|}=1$  and $|z|^T(J_S+A(G))|z|=\lambda_1(G_S),$ we conclude that $|z|$ is an eigenvector corresponding to $\lambda_1(G_S)$ for the matrix $J_S +A(G).$ 
Since $G$ is connected, by Perron-Frobenius Theorem \cite[\S 2]{gantmacher2005applications}, $\lambda_1(G_S)$ is a simple eigenvalue and there exists an eigenvector $\alpha$ for $\lambda_1(G_S)$ whose all entries are positive. Then $|z|$ is a multiple of $\alpha.$ 
This implies that 
$z_i \neq 0$ for $i=1,\ldots, n.$ Note that if there exist some $i$ and $j$ with $1\leq i,j\leq n$ such that $a_{ij}=1$ and $z_iz_j>0,$ then \eqref{eq:inequality1} becomes a strict inequality. Thus, by taking the summation on both sides over $i,j,$ we obtain a strict inequality that contradicts \eqref{eq:equality0}. We shall show that this contradiction occurs if $G$ is not bipartite.

Assume $G$ is not bipartite, that is, $G$ contains an odd cycle with vertices $v_1,v_2,\ldots,$ $v_{2k+1}.$ 
Then, for $i=1,\ldots, 2k+1$, we have $a_{i,r_i}=1,$ where $r_i=i+1\mod{(2k+1)}$. 
Note that $z_\ell z_{r_{\ell}}$ cannot be negative for every $1\leq \ell\leq 2k+1$ because there are odd distinct vertices. So, there exists an $\ell$ with $1 \leq \ell \leq 2k+1$ such that $z_\ell z_{r_{\ell}}>0.$ 
Thus, $G$ is bipartite. The proof is  complete.
\end{proof}

\begin{remark}
Let $S \subseteq V(G).$  Theorem~\ref{result2} provides a way of determining the bipartiteness of a graph directly from the eigenvalues of its self-loop graphs $G_S$ and the eigenvalues of $G_{V(G)\backslash S}.$ Indeed, if we have $\lambda_1(G_S)$ and $\lambda_n(G_{V(G)\backslash S}),$ we can determine whether $G$ is bipartite.
\end{remark}

Another immediate consequence of Theorem~\ref{result2} is the following corollary.

\begin{corollary}\cite[Theorem 3]{gutman2021energy}
\label{result1.1}
Let $G$ be a bipartite graph of order $n$ with vertex set $V(G).$ Let $S$ be a nonempty subset of $V(G).$ Then,  $\cE(G_S)=\cE(G_{V(G)\backslash S}).$
\end{corollary}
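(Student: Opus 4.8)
The plan is to derive Corollary~\ref{result1.1} directly from the eigenvalue correspondence established in Theorem~\ref{result2}. The energy of a self-loop graph with $\sigma$ loops on $n$ vertices is $\cE(G_S)=\sum_{i=1}^n\left|\lambda_i(G_S)-\frac{\sigma}{n}\right|$, so the entire argument reduces to comparing the shifted eigenvalues of $G_S$ with those of $G_{V(G)\backslash S}$. First I would record that $|S|=\sigma$ forces $|V(G)\backslash S|=n-\sigma$, so that the energy of the complementary self-loop graph carries the shift $\frac{n-\sigma}{n}$ rather than $\frac{\sigma}{n}$. Keeping track of these two distinct shifts is the only real bookkeeping in the proof.

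Next I would invoke Theorem~\ref{result2} (or equivalently Theorem~\ref{result1}, which already handles the bipartite direction): since $G$ is bipartite, the eigenvalues of $G_{V(G)\backslash S}$ are exactly $1-\lambda_i(G_S)$ for $i=1,\ldots,n$. Therefore
\begin{align*}
\cE(G_{V(G)\backslash S})
&=\sum_{i=1}^n\left|\bigl(1-\lambda_i(G_S)\bigr)-\frac{n-\sigma}{n}\right|
=\sum_{i=1}^n\left|\frac{\sigma}{n}-\lambda_i(G_S)\right|
=\cE(G_S),
\end{align*}
where the middle equality comes from simplifying $1-\frac{n-\sigma}{n}=\frac{\sigma}{n}$ and the last from $|x|=|-x|$. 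The reindexing induced by passing from $\lambda_i(G_S)$ to $1-\lambda_i(G_S)$ (which reverses the ordering) is immaterial, since the energy is a sum over the full unordered multiset of eigenvalues.

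The step I expect to require the most care is confirming that the two shifts combine cleanly: one must verify $1-\frac{n-\sigma}{n}=\frac{\sigma}{n}$ so that the additive shift of $1$ coming from the eigenvalue correspondence is precisely absorbed by the difference between the two centering constants $\frac{\sigma}{n}$ and $\frac{n-\sigma}{n}$. This is the conceptual heart of why the identity holds, even though the calculation itself is a single line. Everything else is a routine application of Theorem~\ref{result2} together with the symmetry $|x|=|-x|$, and no further structural input about $G$ beyond bipartiteness is needed.
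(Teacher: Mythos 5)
Your proof is correct and is exactly the argument the paper has in mind: the paper states this corollary as an ``immediate consequence'' of Theorem~\ref{result2} (via the bipartite direction, Theorem~\ref{result1}) without writing out the computation, and your computation---applying the eigenvalue correspondence $\lambda_i \mapsto 1-\lambda_i(G_S)$ and checking that the shift $1-\frac{n-\sigma}{n}=\frac{\sigma}{n}$ absorbs the centering constants---is precisely the omitted verification. Nothing to correct.
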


Before closing this section, we discuss a case that answers \cite[Conjecture 2]{gutman2021energy}. 
As a motivation, let us consider the case $G=K_{3,3}.$ Using the spectrum determined Section~\ref{sec2.2}, the energy of $(K_{3,3})_S$ is obtained as follows.

\begin{table}[h]
\centering
\begin{tabular}{ |c|c| } 
 \hline
 $\sigma=|S|$ & $\cE((K_{3,3})_S) \approx$   \\ 
 \hline
 0 & 6.0000 \\ 
 \hline
 1 & 7.0690 \\ 
 \hline
 2 & 7.4513 \\ 
 \hline
 3 & 8.0828\\ 
 \hline
 4 & 7.4513 \\ 
 \hline
 5 & 7.0690 \\ 
 \hline
 6 & 6.0000\\ 
 \hline
\end{tabular}
\caption{The energy of $(K_{3,3})_S.$}
\end{table}
From the table, we observe that when there are loops, $(K_{3,3})_S$ has at least the energy of the ordinary graph $K_{3,3}.$ Thus, it is natural to ask \textit{whether the same is true for all bipartite graphs}.
Indeed, we provide an affirmative answer to this question. Before proving this, let us recall an inequality in \cite{AKBARI2020205}.

\begin{theorem}\cite{AKBARI2020205}
\label{energyineq1}
Let $A$ be a Hermitian matrix in the following block form:
\[
A=
\begin{bmatrix}
B & D \\ D^* & C
\end{bmatrix}.
\]
Then, $\cE(A)\geq 2 \cE(D).$
\end{theorem}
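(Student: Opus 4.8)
The plan is to reinterpret the energy as a nuclear norm and then play the diagonal blocks off against the off-diagonal block by a sign-flipping unitary conjugation. Recall that for a matrix the energy is the sum of its singular values; since $A$ is Hermitian its singular values are the numbers $|\lambda_i(A)|$, so $\cE(A)=\sum_i|\lambda_i(A)|=\|A\|_*$, the nuclear norm, whereas $\cE(D)=\sum_i\sigma_i(D)=\|D\|_*$. The nuclear norm is a genuine norm and is unitarily invariant, and these are the only two structural facts I will need.

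First I would introduce the unitary involution
\[
P=\begin{bmatrix} I & \textbf{0} \\ \textbf{0} & -I \end{bmatrix},
\]
which satisfies $P=P^{*}=P^{-1}$ and conjugates $A$ into $PAP^{*}=\begin{bmatrix} B & -D \\ -D^{*} & C \end{bmatrix}$, leaving the diagonal blocks fixed but reversing the sign of the off-diagonal blocks. Unitary invariance gives $\|PAP^{*}\|_*=\|A\|_*$, and forming the difference isolates the off-diagonal part:
\[
A-PAP^{*}=\begin{bmatrix} \textbf{0} & 2D \\ 2D^{*} & \textbf{0} \end{bmatrix}.
\]
Applying the triangle inequality to $A-PAP^{*}=A+(-PAP^{*})$ then yields
\[
\left\| \begin{bmatrix} \textbf{0} & 2D \\ 2D^{*} & \textbf{0} \end{bmatrix} \right\|_*=\|A-PAP^{*}\|_*\leq \|A\|_*+\|PAP^{*}\|_*=2\|A\|_*=2\,\cE(A).
\]

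Next I would evaluate the left-hand side explicitly. Taking a singular value decomposition $D=U\Sigma V^{*}$, the Hermitian dilation $\begin{bmatrix} \textbf{0} & D \\ D^{*} & \textbf{0} \end{bmatrix}$ has eigenvalues exactly $\pm\sigma_i(D)$ (together with additional zeros when $D$ is rectangular), so the sum of the absolute values of its eigenvalues is $2\sum_i\sigma_i(D)=2\,\cE(D)$. Scaling by the factor $2$ multiplies every singular value by $2$, so the nuclear norm of the block matrix above equals $4\,\cE(D)$. Combining this with the displayed inequality gives $4\,\cE(D)\leq 2\,\cE(A)$, that is, $\cE(A)\geq 2\,\cE(D)$, which is the desired conclusion.

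I expect the only genuinely technical point to be the spectral computation for the Hermitian dilation $\begin{bmatrix} \textbf{0} & D \\ D^{*} & \textbf{0} \end{bmatrix}$, namely that its nonzero eigenvalues are precisely $\pm\sigma_i(D)$. This is the standard relationship between the singular values of $D$ and the eigenvalues of its Jordan--Wielandt form; once it is in hand, the remainder of the argument is just unitary invariance together with the triangle inequality for the nuclear norm, both of which are immediate.
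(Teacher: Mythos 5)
Your proof is correct: the sign-flip conjugation $P=\mathrm{diag}(I,-I)$, unitary invariance and the triangle inequality for the nuclear norm, and the Jordan--Wielandt computation that the dilation $\begin{bmatrix} \textbf{0} & D \\ D^* & \textbf{0} \end{bmatrix}$ has nonzero eigenvalues $\pm\sigma_i(D)$ all check out, and your reading of $\cE(D)$ as the sum of singular values is the one needed for the paper's later application to bipartite graphs. Note that the paper itself gives no proof of this statement --- it is imported from the cited reference --- and your argument is essentially the standard (and, as far as the source goes, the original) one, so there is nothing to compare beyond observing that you have supplied a complete, self-contained derivation of a result the paper only quotes.
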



\begin{theorem}
\label{energyineq2}
If $G$ is a bipartite graph and $S \subseteq V(G),$ then, $\cE(G_S) \geq \cE(G).$
\end{theorem}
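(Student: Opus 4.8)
The plan is to recognize $\cE(G_S)$ as an ordinary matrix energy, put the relevant matrix into the block form demanded by Theorem~\ref{energyineq1}, and then identify the resulting lower bound with $\cE(G)$. The point that makes everything work is that the quantity $\sigma/n$ subtracted in the definition of loop-graph energy, together with $J_S$, contributes only to the diagonal blocks relative to the bipartition, leaving the off-diagonal block untouched.

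First I would observe that the shifted quantities $\lambda_i(G_S)-\tfrac{\sigma}{n}$ occurring in the definition of $\cE(G_S)$ are exactly the eigenvalues of the real symmetric (hence Hermitian) matrix $M:=A(G_S)-\tfrac{\sigma}{n}I_n = J_S+A(G)-\tfrac{\sigma}{n}I_n$. Consequently $\cE(G_S)=\sum_{i=1}^n\bigl|\lambda_i(M)\bigr|$ is precisely the matrix energy of $M$. Next I would order the vertices so that the two colour classes $X$ and $Y$ of the bipartition come first and second; then $A(G)=\left[\begin{smallmatrix}\mathbf{0}&D\\ D^{T}&\mathbf{0}\end{smallmatrix}\right]$, where $D$ is the biadjacency matrix of $G$. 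Since $J_S$ is a diagonal $0/1$ matrix and $\tfrac{\sigma}{n}I_n$ is scalar, both are block-diagonal with respect to this partition, so neither affects the off-diagonal blocks. Hence $M=\left[\begin{smallmatrix}B&D\\ D^{*}&C\end{smallmatrix}\right]$ with $B,C$ real symmetric (indeed diagonal) and off-diagonal block still exactly $D$, noting $D^{*}=D^{T}$ as $D$ is real.

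At this stage Theorem~\ref{energyineq1} applies verbatim and gives $\cE(G_S)=\cE(M)\geq 2\cE(D)$, where $\cE(D)$ is the sum of the singular values of the rectangular block $D$. To close the argument I would invoke the standard fact that for a bipartite graph the nonzero eigenvalues of $A(G)$ are precisely $\pm$ the singular values of $D$, so that $\cE(G)=\sum_i|\lambda_i(A(G))|=2\sum_k\sigma_k(D)=2\cE(D)$. Combining the two relations yields $\cE(G_S)\geq 2\cE(D)=\cE(G)$, as desired.

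The argument is short once these observations are assembled, and I do not expect a genuine obstacle. The only things to verify with care are that the diagonal perturbations $J_S$ and $\tfrac{\sigma}{n}I_n$ really do leave the off-diagonal block equal to $D$ (so the right-hand side of Theorem~\ref{energyineq1} is independent of $S$), and the bipartite identity $\cE(G)=2\cE(D)$. The conceptual step worth highlighting is that the $\sigma/n$ recentering built into the definition of $\cE(G_S)$ is exactly what renders the diagonal blocks harmless, allowing Theorem~\ref{energyineq1} to return the ordinary energy $\cE(G)$ as the lower bound rather than a quantity depending on the loops.
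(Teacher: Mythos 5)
Your proposal is correct and follows essentially the same route as the paper: both identify $\cE(G_S)$ with the matrix energy of $A(G_S)-\tfrac{\sigma}{n}I_n$, observe that the diagonal perturbations $J_S$ and $\tfrac{\sigma}{n}I_n$ leave the off-diagonal bipartite block $D$ unchanged, and apply Theorem~\ref{energyineq1} together with the identity $\cE(G)=2\cE(D)$. Your write-up merely makes explicit some details (the vertex ordering and the singular-value justification of $\cE(G)=2\cE(D)$) that the paper leaves implicit.
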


\begin{proof}
Let $A(G)=\begin{bmatrix}
\textbf{0} & D \\ D^T & \textbf{0}
\end{bmatrix}$ be its adjacency matrix. Note that 
\[
 \cE(G_S) = \cE(A(G_S) - \frac{\sigma}{n} I_n),
 \]
where $\sigma=|S|$ and $n=|V(G)|.$ By Theorem~\ref{energyineq1},
\[
\cE(G)
=2\cE(D) \leq \cE(A(G_S)-\frac{\sigma}{n}I_n) = \cE(G_S).
\vspace{-2em}
\] 
\end{proof}

Theorem~\ref{energyineq2} answers a special case (bipartite graphs) of \cite[Conjecture 2]{gutman2021energy}.  
Lastly, we propose the following conjecture.

\begin{conjecture}
For every simple graph $G,$ there exists $S \subseteq V(G)$ such that $\cE(G_S) > \cE(G).$
\end{conjecture}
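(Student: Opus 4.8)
The plan is to recast the energy as a \emph{nuclear norm} and exploit its convexity. For $t\in\R^n$ set $\bar t=\frac1n\sum_i t_i$ and
\[
M(t)=A(G)+\mathrm{diag}(t)-\bar t\,I_n,\qquad \psi(t)=\sum_{i=1}^n\bigl|\lambda_i(M(t))\bigr|=\|M(t)\|_*.
\]
Since $A(G)$ has zero trace, $M(t)$ is traceless and $\psi(\mathbf 1_S)=\cE(G_S)$ for every $S$, while $\psi(\mathbf 0)=\psi(\mathbf 1)=\cE(G)$. As $M$ is affine in $t$ and $\|\cdot\|_*$ is convex, $\psi$ is convex on the cube $[0,1]^n$. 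The key reduction is: \emph{it suffices to find a single vertex $v$ whose one-sided directional derivative $D_v^+\psi(\mathbf 0)$ is positive}, because convexity of $t\mapsto\psi(te_v)$ then yields $\cE(G_{\{v\}})=\psi(e_v)\ge\psi(\mathbf 0)+D_v^+\psi(\mathbf 0)>\cE(G)$. (Throughout I assume $G$ has at least one edge; the one-vertex graph $K_1$ is a genuine exception, since there $\cE(G_S)=0=\cE(G)$ for both choices of $S$, so I would add the hypothesis $n\ge 2$ with $E(G)\neq\emptyset$.)

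First I would compute $D_v^+\psi(\mathbf 0)$ from the spectral decomposition $A(G)=\sum_i\lambda_iu_iu_i^\top$. Let $p,q$ be the numbers of positive and negative eigenvalues, let $\operatorname{sgn}A(G)=\sum_{\lambda_i\ne0}\operatorname{sgn}(\lambda_i)u_iu_i^\top$, and let $U_0$ be an orthonormal basis of $\ker A(G)$ with $z=\dim\ker A(G)$. The standard subdifferential formula for the nuclear norm gives
\[
D_v^+\psi(\mathbf 0)=\underbrace{[\operatorname{sgn}A(G)]_{vv}-\tfrac{p-q}{n}}_{=:\,\tilde s_v}+\Bigl\|U_0^\top\bigl(e_ve_v^\top-\tfrac1n I\bigr)U_0\Bigr\|_*.
\]
Summing over $v$ kills the first part, since $\sum_v\tilde s_v=\mathrm{Tr}(\operatorname{sgn}A(G))-(p-q)=0$, leaving $\sum_v D_v^+\psi(\mathbf 0)=\sum_v\|r_vr_v^\top-\tfrac1n I_z\|_*$ with $r_v=U_0^\top e_v$. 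Each summand is nonnegative, and is \emph{strictly} positive whenever $z\ge2$ (a rank-$\le1$ matrix cannot equal $\tfrac1n I_z$) or when $z=1$ and the null vector $u_0$ has some entry with $(u_0)_v^2\ne\tfrac1n$. In all these cases some $D_v^+\psi(\mathbf 0)>0$ and the conjecture follows for $G_{\{v\}}$.

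It remains to treat $z=0$ (nonsingular $A(G)$). There $\psi$ is real-analytic near $\mathbf 0$, because $\|X\|_*=\mathrm{Tr}\sqrt{X^2}$ is smooth at invertible symmetric $X$, and $D_v^+\psi(\mathbf 0)=\tilde s_v$ with $\sum_v\tilde s_v=0$. If the $\tilde s_v$ are not all zero, one of them is positive and we are done. If they all vanish (as happens for vertex-transitive graphs such as $K_n$), I would pass to the Hessian, which along $e_v$ equals
\[
\partial_{t_v}^2\psi(\mathbf 0)=2\!\!\sum_{\substack{i\ne j\\\lambda_i\lambda_j<0}}\frac{(u_i)_v^2(u_j)_v^2}{|\lambda_i-\lambda_j|}\ \ge\ 0,
\]
a manifestly nonnegative expression. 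If it were $0$ for every $v$, then at each vertex the eigenvectors with nonzero $v$-entry would all share one sign, forcing a partition $V=V_+\sqcup V_-$ with the positive (resp.\ negative) eigenvectors supported on $V_+$ (resp.\ $V_-$); this would make $A(G)$ block-diagonal with a positive-definite diagonal block of zero trace, which is impossible. Hence some $\partial_{t_v}^2\psi(\mathbf 0)>0$, and convexity again gives $\cE(G_{\{v\}})>\cE(G)$.

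The hard part is the single residual configuration left uncovered: $z=1$ with a null vector of constant modulus $|(u_0)_v|=1/\sqrt n$ \emph{and} $\operatorname{sgn}A(G)$ of constant diagonal. Here both first-order contributions vanish for every centered-diagonal direction at once, so no single loop is detected to first order, and one is forced into a one-sided \emph{second-order} expansion that simultaneously tracks the $O(\varepsilon^2)$ drift of the perturbed zero eigenvalue and the analytic Hessian of the nonzero part. I expect the same ``definite block of zero trace'' obstruction to force strict positivity in this case as well, but controlling the interaction of the kink at the zero eigenvalue with the smooth curvature---and ruling out the possibility that one must instead use a two-element $S$---is exactly where the difficulty concentrates, which is consistent with the statement being offered only as a conjecture.
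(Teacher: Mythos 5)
This statement is posed in the paper as an open conjecture --- the authors supply no proof --- so your attempt cannot be compared against anything in the text; what follows assesses it on its own terms. The machinery you set up is sound in every case you actually resolve: $\psi(t)=\lVert A(G)+\mathrm{diag}(t)-\bar t I_n\rVert_*$ is convex with $\psi(\mathbf 1_S)=\cE(G_S)$, your one-sided derivative formula $D^+\psi(\mathbf 0)[H]=\langle\operatorname{sgn}A(G),H\rangle+\lVert U_0^\top HU_0\rVert_*$ is the correct directional derivative of the nuclear norm at a symmetric matrix, the trace argument killing $\sum_v\tilde s_v$ is right, and I verified the three settled cases: $z\ge 2$ (rank obstruction), $z=1$ with non-flat kernel vector or non-constant $\tilde s_v$, and $z=0$ including the Hessian formula and the block-diagonal contradiction (a positive semidefinite principal block of $A(G)$ has zero diagonal, hence zero trace, hence vanishes). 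Your observation that $K_1$ falsifies the strict inequality as literally stated is also correct and worth reporting; note however that your extra hypothesis $E(G)\neq\emptyset$ is unnecessary, since for the empty graph on $n\ge 2$ vertices your own $z\ge 2$ case applies (or directly $\cE(G_S)=2\sigma(n-\sigma)/n>0=\cE(G)$ for $0<\sigma<n$).

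The genuine gap is the one you flag yourself: the residual case $z=1$ with $(u_0)_v^2=1/n$ for all $v$ and $[\operatorname{sgn}A(G)]_{vv}$ constant is left unproved, so as submitted this is a partial result, not a proof. But the difficulty you anticipate there does not actually materialize, and your own tools close it. Since the zero eigenvalue is simple, its analytic branch under $A(G)+\varepsilon H_v$ satisfies $\lambda_0(\varepsilon)=O(\varepsilon^2)$ (its linear coefficient $(u_0)_v^2-\tfrac1n$ vanishes by hypothesis), and it enters $\psi$ only through $|\lambda_0(\varepsilon)|\ge 0$: a purely additive nonnegative term, so there is no interaction between the kink and the smooth curvature that could cancel anything. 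Meanwhile the smooth part $\sum_{\lambda_i\neq 0}|\lambda_i(\varepsilon)|$ has vanishing first derivative (this is $\tilde s_v=0$) and one-sided second derivative equal to your Hessian expression \emph{plus} the kernel-coupling terms
\begin{equation*}
2\sum_{\lambda_i\neq 0}\frac{(u_0)_v^2(u_i)_v^2}{|\lambda_i|}
=\frac{2}{n}\sum_{\lambda_i\neq 0}\frac{(u_i)_v^2}{|\lambda_i|}>0,
\end{equation*}
strictly positive because $\sum_{\lambda_i\neq 0}(u_i)_v^2=1-(u_0)_v^2=1-\tfrac1n>0$ for $n\ge 2$. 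Hence $\psi(\varepsilon e_v)\ge\psi(\mathbf 0)+c\varepsilon^2+O(\varepsilon^3)$ with $c>0$, and convexity of $t\mapsto\psi(te_v)$ upgrades this to $\cE(G_{\{v\}})=\psi(e_v)>\psi(\mathbf 0)=\cE(G)$ exactly as in your $z=0$ case. You should carry out this expansion rather than stop: with it, your method proves the conjecture for every simple graph on $n\ge 2$ vertices, going strictly beyond what the paper establishes (Theorem~\ref{energyineq2} gives only the non-strict inequality $\cE(G_S)\ge\cE(G)$, and only for bipartite $G$).
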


\section{Upper bound for the energy of $G_S$.}
\label{sec4}

We first recall the following theorem.
\begin{theorem}\cite[Theorem 4]{ghodrati2022graph}
\label{ineqbound1}
For any $(a_{ij})=A \in M_n(\C)$ with eigenvalues $\lambda_1,\ldots, \lambda_n,$ 
\[
\sum^n_{i=1} |\lambda_i| \leq \sum^n_{i=1} \norm{A_i},
\]
where $A_i$ denotes the $i$-th row of $A$ and $\norm{A_i}= \sqrt{a^2_{i1} + a^2_{i2} + \cdots + a^2_{in}}.$ 
\end{theorem}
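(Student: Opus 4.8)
The plan is to \emph{linearize} the quantity $\sum_{i=1}^{n}|\lambda_i|$ into the trace of a product $AB$ with $B$ unitary, and then estimate that trace row-by-row. The starting point is the Schur triangularization $A=UTU^{*}$, where $U$ is unitary and $T=(t_{ij})$ is upper triangular with the eigenvalues on the diagonal, $t_{ii}=\lambda_i$. This places the eigenvalues literally on the diagonal of a matrix unitarily equivalent to $A$, which is exactly what is needed to turn the spectral quantity $\sum_i|\lambda_i|$ into a trace that can be read off entrywise.

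First I would absorb the arguments of the eigenvalues. Let $D=\diag(d_1,\dots,d_n)$ be the diagonal unitary with $d_i=\overline{\lambda_i}/|\lambda_i|$ when $\lambda_i\neq 0$ and $d_i=1$ otherwise, so that $d_i\lambda_i=|\lambda_i|$ for every $i$. Since $D$ is diagonal,
\[
\sum_{i=1}^{n}|\lambda_i|=\sum_{i=1}^{n}d_i t_{ii}=\mathrm{tr}(DT)=\mathrm{tr}(DU^{*}AU)=\mathrm{tr}(AUDU^{*})=\mathrm{tr}(AB),
\]
where $B:=UDU^{*}$, and the middle equalities use $T=U^{*}AU$ together with the cyclic invariance of the trace. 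The matrix $B$ is a product of unitaries, hence unitary, so $\norm{B}_{\mathrm{op}}=1$ and, in particular, $\norm{Be_i}\leq\norm{B}_{\mathrm{op}}\norm{e_i}=1$ for each standard basis vector $e_i$.

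It then remains only to bound $\mathrm{tr}(AB)$ by a sum of row norms. Writing the trace as a sum over diagonal entries gives $\mathrm{tr}(AB)=\sum_{i=1}^{n}(AB)_{ii}=\sum_{i=1}^{n}A_i\cdot(Be_i)$, where $A_i$ is the $i$-th row of $A$ and $Be_i$ is the $i$-th column of $B$. Using that $\mathrm{tr}(AB)=\sum_i|\lambda_i|$ is real and nonnegative, the Cauchy--Schwarz inequality yields
\[
\sum_{i=1}^{n}|\lambda_i|=\mathrm{tr}(AB)\leq\sum_{i=1}^{n}\bigl|A_i\cdot(Be_i)\bigr|\leq\sum_{i=1}^{n}\norm{A_i}\,\norm{Be_i}\leq\sum_{i=1}^{n}\norm{A_i},
\]
which is the claimed inequality.

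The one genuinely load-bearing step is the passage to $\mathrm{tr}(AB)$: Schur triangularization is what makes $\sum_i|\lambda_i|$, a basis-free spectral quantity, accessible to a coordinate-wise estimate, and the diagonal phase unitary $D$ is what strips the arguments so that the \emph{real} sum $\sum_i|\lambda_i|$ surfaces as a trace rather than $|\mathrm{tr}(A)|=|\sum_i\lambda_i|$. Everything after that is a single application of Cauchy--Schwarz. An alternative route would factor through the nuclear norm, proving $\sum_i|\lambda_i|\leq\norm{A}_{1}=\sum_i s_i$ (Weyl's majorant inequality relating eigenvalue moduli to singular values) and then $\norm{A}_{1}\leq\sum_i\norm{A_i}$ by decomposing $A=\sum_i e_iA_i$ into rank-one outer products and using the triangle inequality; but this relies on the heavier majorization machinery, so I would prefer the direct argument above.
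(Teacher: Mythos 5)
Your proof is correct, but there is no proof in the paper to compare it with: the statement is imported verbatim from \cite[Theorem 4]{ghodrati2022graph} and used as a black box in Section 4, so your argument is a self-contained addition rather than a variant of something the authors wrote. The chain itself is sound: Schur triangularization puts the eigenvalues on the diagonal of $T=U^{*}AU$; the phase matrix $D$ is unitary, hence so is $B=UDU^{*}$; cyclicity of the trace gives $\sum_i|\lambda_i|=\mathrm{tr}(DT)=\mathrm{tr}(AB)$; and since this quantity is real and nonnegative, the row-by-row Cauchy--Schwarz estimate gives $\mathrm{tr}(AB)\le\sum_i\norm{A_i}\,\norm{Be_i}\le\sum_i\norm{A_i}$, the last step because the columns of a unitary matrix are unit vectors. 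In effect you have reproved the inequality $\sum_i|\lambda_i|\le\max\{|\mathrm{tr}(AW)| : W \text{ unitary}\}$, whose right-hand side is the nuclear norm $\sum_i s_i(A)$; the merit of constructing the explicit unitary $B$ from the Schur form is that this identification, and Weyl's majorant theorem which your alternative route would require, are never invoked, so the argument stays at the level of Schur decomposition plus Cauchy--Schwarz. One point of hygiene: the theorem as printed defines $\norm{A_i}=\sqrt{a_{i1}^2+\cdots+a_{in}^2}$, which for complex entries should be read as $\sqrt{|a_{i1}|^2+\cdots+|a_{in}|^2}$; your Cauchy--Schwarz step uses the latter (correct) reading, and in the paper's only application the matrix $A(G_S)-\frac{\sigma}{n}I_n$ is real symmetric, so nothing is affected.
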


For $A \in M_n(\C),$ the \textit{energy of a complex matrix} is defined to be 
\[
\cE(A)= \sum^n_{i}|\lambda_i(A)|,
\]
where $\lambda_1(A),\ldots, \lambda_n(A)$ are eigenvalues of $A.$ Thus, $\cE(A)\leq \sum^n_{i=1} \norm{A_i}.$

As a consequence, we obtain an alternative proof of an upper bound for the energy of $G_S$ given by Gutman et al. and discuss the equality case.

\begin{corollary}\cite[Theorem 6]{gutman2021energy}
Let $G_S$ be a self-loop graph of order $n$ and size $m$ with $|S|=\sigma$.  Then, 
\begin{equation}\label{eq:energyupb}
\cE(G_S) \leq \sqrt{n\left(2m+ \sigma - \frac{\sigma^2}{n}\right)}.
\end{equation}
Suppose the equality holds, then $G_S$ is $(a,b)$-semi-regular,  where 
\[
a = \frac{2m}{n} + \frac{3\sigma}{n} - \frac{2\sigma^2}{n^2} -1 , \quad b = \frac{2m}{n} + \frac{\sigma}{n} -\frac{2\sigma^2}{n^2}.
\]
\end{corollary}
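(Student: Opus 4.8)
The plan is to derive the inequality \eqref{eq:energyupb} directly from Theorem~\ref{ineqbound1} applied to the matrix $M := A(G_S) - \frac{\sigma}{n} I_n$, combined with the Cauchy--Schwarz inequality, and then to extract the equality condition by tracking when each inequality used becomes an equality. Since $\cE(G_S) = \cE\!\left(A(G_S) - \frac{\sigma}{n} I_n\right)$ by definition, Theorem~\ref{ineqbound1} gives
\[
\cE(G_S) \;\leq\; \sum_{i=1}^n \norm{M_i},
\]
where $M_i$ is the $i$-th row of $M$. First I would compute $\sum_{i=1}^n \norm{M_i}^2$ explicitly. For a vertex $v_i$ of degree $d_i$, the $i$-th row of $A(G_S)$ has $d_i$ off-diagonal ones and a diagonal entry equal to $1$ if $v_i \in S$ and $0$ otherwise; subtracting $\frac{\sigma}{n}$ on the diagonal, the squared row norm is $d_i + \left(\mathbf{1}_{v_i\in S} - \tfrac{\sigma}{n}\right)^2$. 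Summing over $i$ and using $\sum_i d_i = 2m$ together with the fact that exactly $\sigma$ vertices lie in $S$, the diagonal contribution telescopes to $\sigma\left(1-\tfrac{\sigma}{n}\right)^2 + (n-\sigma)\tfrac{\sigma^2}{n^2} = \sigma - \tfrac{\sigma^2}{n}$, so that $\sum_{i=1}^n \norm{M_i}^2 = 2m + \sigma - \tfrac{\sigma^2}{n}$.

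Next I would apply Cauchy--Schwarz in the form $\sum_{i=1}^n \norm{M_i} \leq \sqrt{n \sum_{i=1}^n \norm{M_i}^2}$, which immediately yields
\[
\cE(G_S) \;\leq\; \sqrt{n\!\left(2m + \sigma - \tfrac{\sigma^2}{n}\right)},
\]
establishing \eqref{eq:energyupb}. For the equality case, note that equality in Cauchy--Schwarz forces all row norms $\norm{M_i}$ to be equal, say to a common value. Concretely $d_i + \left(\mathbf{1}_{v_i\in S} - \tfrac{\sigma}{n}\right)^2$ must be independent of $i$. Splitting according to whether $v_i \in S$ or not, a looped vertex contributes $d_i + \left(1-\tfrac{\sigma}{n}\right)^2$ and an unlooped vertex contributes $d_i + \tfrac{\sigma^2}{n^2}$; equating these common squared norms to the average $\tfrac{1}{n}\left(2m + \sigma - \tfrac{\sigma^2}{n}\right) = \tfrac{2m}{n} + \tfrac{\sigma}{n} - \tfrac{\sigma^2}{n^2}$ and solving for $d_i$ in each case gives precisely the two degrees
\[
a = \frac{2m}{n} + \frac{3\sigma}{n} - \frac{2\sigma^2}{n^2} - 1, \qquad b = \frac{2m}{n} + \frac{\sigma}{n} - \frac{2\sigma^2}{n^2},
\]
so that $G_S$ is $(a,b)$-semi-regular, where $a$ is the degree at looped vertices and $b$ at unlooped ones.

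I expect the main subtlety to lie not in the arithmetic but in handling the equality condition for Theorem~\ref{ineqbound1} itself: Cauchy--Schwarz equality alone only pins down the row norms, whereas genuine equality in $\cE(G_S) = \sum_i \norm{M_i}$ requires the stronger spectral condition underlying Theorem~\ref{ineqbound1} (essentially that $M$ be normal with a suitable alignment of its rows). The clean statement to be extracted here is only the necessary consequence for the degree sequence, so I would phrase the equality part as: if equality holds in \eqref{eq:energyupb}, then in particular equality holds in the Cauchy--Schwarz step, whence the row norms are constant and the semi-regularity with the stated parameters follows. This keeps the argument rigorous while sidestepping a full characterization of the equality case for the row-norm bound, which is the genuinely delicate point.
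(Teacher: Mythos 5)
Your proposal is correct and follows essentially the same route as the paper: both apply Theorem~\ref{ineqbound1} to $A(G_S)-\frac{\sigma}{n}I_n$, compute the row norms $\sqrt{d_i+(1-\frac{\sigma}{n})^2}$ and $\sqrt{d_j+\frac{\sigma^2}{n^2}}$, use Cauchy--Schwarz, and extract semi-regularity from the equality of all row norms. The only cosmetic difference is that you solve for $a$ and $b$ by equating each squared row norm to the average $\frac{1}{n}\left(2m+\sigma-\frac{\sigma^2}{n}\right)$, while the paper solves the equivalent simultaneous equations $2m=\sigma a+(n-\sigma)b$ and $a+\left(1-\frac{\sigma}{n}\right)^2=b+\frac{\sigma^2}{n^2}$.
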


\begin{proof}
Let $\displaystyle B=A(G_S) - \frac{\sigma}{n} I_n.$ 
By Theorem~\ref{ineqbound1}, we have 
\[
\cE(G_S)= \sum^n_{i=1} \left|\lambda_i(B) \right| =\sum^n_{i=1} \left|\lambda_i(G_S) - \frac{\sigma}{n} \right|  \leq \sum^n_{i=1} \norm{B_i}.
\] 
Hence, we have  
\begin{align*}
\sum^n_{i=1} \norm{B_i} 
&= \sqrt{d_1 + \left(1-\frac{\sigma}{n}\right)^2} + \cdots + \sqrt{d_\sigma + \left(1-\frac{\sigma}{n}\right)^2} \\ 
&\qquad\qquad+ \sqrt{d_{\sigma+1} + \left(\frac{\sigma}{n}\right)^2} + \cdots + \sqrt{d_n + \left(\frac{\sigma}{n}\right)^2}. \nonumber
\end{align*}
By Cauchy-Schwarz inequality, we obtain
\[
\cE(G_S) \leq  \sqrt{n\left(2m+ \sigma - \frac{\sigma^2}{n}\right)}.
\]
If the equality holds, then $d_i+(1-\frac{\sigma}{n})^2=d_{i+1}+(1-\frac{\sigma}{n})^2$ for $1\leq i \leq \sigma-1$ and $d_j+(\frac{\sigma}{n})^2=d_{j+1} + (\frac{\sigma}{n})^2$ for $\sigma+1  \leq i \leq n-1.$ One deduces that $a=d_1=\cdots =d_\sigma$ and $b=d_{\sigma+1} = \cdots =d_{n}.$ Thus, the $(a,b)$-semi-regularity can be obtained by solving the simultaneous equations $2m=\sigma a + (n-\sigma)b$ and $a+(1-\frac{\sigma}{n})^2=b+\frac{\sigma^2}{n^2}.$ 
This completes the proof.
\end{proof}



\section{Analogous classical upper bound for $\lambda_1$ in terms of maximum degree}
\label{sec5}

It is well-known that if $G$ is a graph, then $\lambda_1(G)\leq \Delta(G)$. In the next theorem, we generalize it to graphs with self-loop graphs by showing that $\lambda_1(G_S) \leq \Delta(G)+1$.

\begin{theorem}
Let $G_S$ be a self-loop graph of order $n$. Then $\lambda_1(G_S)\leq \Delta(G)+1\leq n.$ Moreover, $\lambda_1(G_S)=n$ if and only if $G_S=\widehat{K_n}.$
\end{theorem}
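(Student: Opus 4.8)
The plan is to handle the inequality and the equality characterization separately, both working directly with the entrywise nonnegative symmetric matrix $A(G_S) = J_S + A(G)$.

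First I would prove the bound by the elementary row-sum (maximum absolute row sum) estimate for nonnegative matrices. The $i$-th row of $A(G_S)$ sums to $d_i$ when $v_i \notin S$ and to $d_i + 1$ when $v_i \in S$, since the loop contributes exactly one $1$ on the diagonal. Because $A(G_S)$ is nonnegative, its spectral radius is at most the maximum row sum, and because it is symmetric its spectral radius coincides with $\lambda_1(G_S)$ (by Perron--Frobenius the spectral radius is itself an eigenvalue, hence equal to the largest one). Therefore $\lambda_1(G_S)$ is at most the maximum row sum, which is at most $\Delta(G)+1$; combined with $\Delta(G)\le n-1$ this yields $\lambda_1(G_S)\le \Delta(G)+1\le n$.

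For the equality statement, one direction is immediate: if $G_S=\widehat{K_n}$ then \eqref{eq:specknhat} gives $\lambda_1=n$. The content is the converse. Suppose $\lambda_1(G_S)=n$; then every inequality above is forced to be an equality, so $\Delta(G)=n-1$ and the maximum row sum equals $n$. I would take a nonnegative eigenvector $x\ge 0$ for the spectral radius (which exists by Perron--Frobenius even when $G$ is disconnected), normalized so that $A(G_S)x=n\,x$, and pick an index $i$ where $x_i$ is maximal; since $x\neq 0$ and $x\ge 0$ we have $x_i>0$. Expanding $n\,x_i = (A(G_S)x)_i = \sum_j (A(G_S))_{ij}\,x_j \le \bigl(\sum_j (A(G_S))_{ij}\bigr) x_i \le n\,x_i$ forces the row sum at $i$ to equal $n$ (so $v_i$ has degree $n-1$ and carries a loop) and $x_j=x_i$ for every $j$ with $(A(G_S))_{ij}>0$. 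As $v_i$ is then adjacent to all other vertices and has a loop, every entry of row $i$ is positive, so $x=x_i\,j_n$ is constant. Substituting $x=j_n$ back into $A(G_S)\,j_n = n\,j_n$ shows that every row sum equals $n$, i.e. $d_k=n-1$ and $v_k\in S$ for all $k$; hence $G=K_n$ and $S=V(G)$, that is $G_S=\widehat{K_n}$.

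The row-sum inequality is routine. I expect the main obstacle to be the equality analysis: one must invoke a genuinely nonnegative eigenvector for $\lambda_1$ without assuming irreducibility (that is, without assuming $G$ connected), and then the maximal-coordinate argument has to be pushed from the local conclusion "the optimal vertex is dominating and looped" to the global conclusion "the eigenvector is constant," which is precisely what upgrades the statement to $G_S=\widehat{K_n}$.
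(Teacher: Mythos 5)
Your proof is correct and follows essentially the same route as the paper's: a Perron--Frobenius nonnegative eigenvector together with a maximal-coordinate expansion of $A(G_S)x=\lambda_1(G_S)x$, with the bound $\lambda_1(G_S)\le\Delta(G)+1\le n$ coming from row sums. If anything, your equality analysis is more complete than the paper's, since you make explicit the propagation step (row $i$ is entirely positive, hence the eigenvector is constant, hence every row sum equals $n$) that the paper passes over when it jumps from the maximal vertex directly to $A(G_S)=J_{n\times n}$.
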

\begin{proof}
Since $A(G_S)$ is a non-negative matrix, by Perron-Frobenius Theorem, there exists an eigenvector $x$ corresponding to eigenvalue $\lambda_1(G_S)$ such that $x^T= 
[x_1, x_2, \ldots, x_n]$ and $x_i \geq 0$ for $i=1,2,\ldots,n.$ 

Assume that $\displaystyle x_r=\max_{1\leq i\leq n}x_i,$ for some $r\in\{1,2,\ldots,n\}$. By considering the $r$-th entry of both sides of $A(G_S)x=\lambda_1(G_S)x$, we have
\begin{equation}\label{eq:Delta1}
x_{i_1}+x_{i_2}+\cdots+x_{i_{d(v_r)}}+\gamma x_r=\lambda_1(G_S) x_r,\quad \gamma\in\{0,1\}.
\end{equation}
This implies that
\[
\lambda_1(G_S) x_r\leq (\Delta (G)+1)x_r.
\]
Hence, $\lambda_1(G_S)\leq \Delta(G)+1$ as $x_r>0$. Since $\Delta(G)\leq n-1,$ $\lambda_1(G_S) \leq n.$ 

If $\lambda_1(G_S)=n$, then Equation~\eqref{eq:Delta1} becomes
\[
x_{i_1}+x_{i_2}+\cdots+x_{i_{d(v_r)}}+\gamma x_r=n x_r,\quad \gamma\in\{0,1\},
\]
with $d(v_r)=\Delta(G)=n-1$, $\gamma=1$, and $x_{i_k}=x_r$ for $k=1,2,\ldots,n-1$. Hence, $A(G_S)=J_{n\times n}$ indicating $G_S=\widehat{K_n}$. Conversely, if $G_S=\widehat{K_n}$, then by \eqref{eq:specknhat}, we have
\[
\text{Spec}(G_S)=
\begin{pmatrix}
n & 0 \\
1 & n-1
\end{pmatrix},
\]
which gives $\lambda_1(G_S)=n$.
\end{proof}

The next theorem provides a sharp lower bound for $\lambda_1(G)$ for any connected graph with loops.

\begin{theorem}
Let $G$ be a connected graph of order $n$ and size $m$. If $S\subseteq V(G)$ with $|S|=\sigma$, 
then
\[
\lambda_1(G_S)\geq \frac{2m}{n}+\frac{\sigma}{n}.
\]
Moreover, if $G$ is a
$\left(k,k+1\right)$-semi-regular graph for some natural number $k,$ 
\[
d_G(v)=\begin{cases}
k,\quad & \text{if $v\in S$,}\\ k+1,\quad &\text{if $v\in V(G)\backslash S,$}  
\end{cases}
\]
where $d_G$ is the degree of vertices of $G$,
then 
\[
\lambda_1(G_S)=\frac{2m}{n}+\frac{\sigma}{n}.
\]
\end{theorem}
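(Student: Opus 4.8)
The plan is to prove the lower bound via the Rayleigh quotient applied to a well-chosen test vector, and then verify that the semi-regular hypothesis forces this test vector to be an actual eigenvector, giving equality. Since $A(G_S)$ is a real symmetric matrix, we have the variational characterization
\[
\lambda_1(G_S)=\max_{\norm{x}=1} x^T A(G_S)\, x \geq \frac{x^T A(G_S)\, x}{x^T x}
\]
for any nonzero $x$. First I would take the all-ones vector $x=j_n$ as the test vector. Then $j_n^T j_n = n$, and $j_n^T A(G_S)\, j_n$ is simply the sum of all entries of $A(G_S)$. The off-diagonal part of $A(G_S)$ is $A(G)$, whose entries sum to $2m$ (each edge contributes two symmetric entries), and the diagonal part $J_S$ contributes exactly $\sigma$ ones. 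Hence $j_n^T A(G_S)\, j_n = 2m+\sigma$, and dividing by $n$ yields
\[
\lambda_1(G_S) \geq \frac{2m+\sigma}{n} = \frac{2m}{n}+\frac{\sigma}{n},
\]
which is the claimed bound.

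For the equality case, I would work directly from the eigenvalue equation rather than re-examining the Rayleigh quotient. Under the $(k,k+1)$-semi-regular hypothesis, I compute the $r$-th entry of $A(G_S)\, j_n$ for each vertex $v_r$. If $v_r\in S$, then $v_r$ carries a loop (contributing $1$ from the diagonal of $J_S$) and has degree $d_G(v_r)=k$, so the $r$-th entry equals $k+1$. If $v_r\in V(G)\backslash S$, there is no loop and $d_G(v_r)=k+1$, so the $r$-th entry again equals $k+1$. Thus $A(G_S)\, j_n = (k+1)\, j_n$, which shows $j_n$ is an eigenvector with eigenvalue $k+1$. Because $A(G_S)$ is nonnegative and $G$ is connected, the Perron–Frobenius Theorem guarantees that the eigenvector with strictly positive entries corresponds to the largest eigenvalue; since $j_n$ has all positive entries, we conclude $\lambda_1(G_S)=k+1$.

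It remains to check that $k+1$ coincides with the lower bound $\frac{2m}{n}+\frac{\sigma}{n}$. By the handshake count, $2m=\sum_{v} d_G(v) = k\sigma + (k+1)(n-\sigma) = (k+1)n-\sigma$, so $2m+\sigma=(k+1)n$ and therefore $\frac{2m+\sigma}{n}=k+1$. This confirms that equality is attained in the semi-regular case, completing the proof.

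I expect the main conceptual point—rather than a genuine obstacle—to be the equality direction: one must invoke Perron–Frobenius to upgrade the statement ``$j_n$ is \emph{an} eigenvector'' to ``$j_n$ realizes the \emph{largest} eigenvalue,'' since a priori $k+1$ could be any eigenvalue. The positivity of the entries of $j_n$ together with connectedness of $G$ is exactly what licenses this step. The inequality direction is routine once the test vector is chosen.
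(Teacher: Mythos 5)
Your proposal is correct and follows essentially the same route as the paper: the Rayleigh quotient with the all-ones test vector for the lower bound, the verification that $A(G_S)j_n=(k+1)j_n$ under the semi-regular hypothesis, the handshake count $2m+\sigma=(k+1)n$, and Perron--Frobenius (using connectedness of $G$) to conclude that the positive eigenvector $j_n$ must belong to $\lambda_1(G_S)$. The only cosmetic difference is that the paper reaches this last conclusion by contradiction---if $k+1$ were $\lambda_i$ for some $i\geq 2$, the positive Perron eigenvector for $\lambda_1$ would have to be orthogonal to $j_n$, which is impossible---whereas you invoke directly the fact that a strictly positive eigenvector of an irreducible nonnegative matrix corresponds to the spectral radius; these are the same argument in substance.
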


\begin{proof}
Since $J_S+A(G)$ is a real symmetric matrix, by Rayleigh quotient, we obtain
\begin{align*}
\lambda_1(G_S)&=\max_{0\neq x\in\mathbb{R}^n} \frac{x^T(J_S+A(G))x}{x^Tx}\\
&\geq  \frac{j^T(J_S+A(G))j}{j^Tj}\\
&=\frac{j^T(J_S)j}{j^Tj}+\frac{j^T(A(G))j}{j^Tj}=\frac{2m}{n}+\frac{\sigma}{n}.
\end{align*}

To show the equality, we consider a $\left(k,k+1\right)$-semi-regular graph $G$, $1\leq k\leq n$, such that
\[
d_G(v)=
\begin{cases}k,\quad & \text{if $v\in S$,}\\ k+1,\quad &\text{if $v\in V(G)\backslash S$.}  
\end{cases}
\] 
Then, $A(G_S)=J_S+A(G)$ gives $(J_S+A(G))j=(k+1)j$. This shows that $j$ is an eigenvector corresponding to the eigenvalue $k+1$, and thus $k+1\in \text{Spec}(G_S)$. Notice that for $G,$  
\[
2m=\sum_{i=1}^n d_{G}(v_i)=\sigma k +(n-\sigma)(k+1) = -\sigma + nk + n.
\]
Therefore, we have
\[
\frac{2m}{n}+\frac{\sigma}{n}=k+1\in \text{Spec}(G_S),
\]
and $j$ is an eigenvector corresponding to $\frac{2m}{n}+\frac{\sigma}{n}$. 

To show that $\frac{2m}{n}+\frac{\sigma}{n}$ is the largest eigenvalue of graph $G_S$, we suppose on the contrary that $\frac{2m}{n}+\frac{\sigma}{n}=\lambda_i(G_S)$ for some $i\geq 2$. Since $A(G_S)$ is a non-negative matrix, by Perron-Frobenius Theorem, there exists an eigenvector $x$ corresponding to eigenvalue $\lambda_1$ such that $x^T=[x_1,\ldots, x_n]$ and $x_i>0$ for $i=1,2,\ldots,n.$ Since $\lambda_i\neq \lambda_1$, it follows that the eigenvector $x$ and $j$ are orthogonal, a contradiction. 
Hence, $\lambda_1(G_S)=\frac{2m}{n}+\frac{\sigma}{n}$.
\end{proof}


\vspace{0.5cm}
\textbf{Acknowledgement.}  Johnny Lim acknowledges the support from the Ministry of Higher Education Malaysia for Fundamental Research Grant Scheme with Project Code: 
\linebreak FRGS/1/2021/STG06/USM/02/7.  The research of this paper was done while the first author visited the School of Mathematical Sciences, Universiti Sains Malaysia, as a visiting professor; he would like to also thank the institute for the invitation and partial financial support. The research of the first author was supported by grant number G981202 from the Sharif University of Technology. 

\vspace{0.5cm}
\textbf{Conflicts of interest.} The authors declare no conflict of interest.

\vspace{0.5cm}

\bibliography{bibliography}{}
\bibliographystyle{amsplain}
\end{document}